\documentclass{article}
\usepackage{graphicx} % Required for inserting images
\usepackage{amsmath,amssymb,amsthm}
\usepackage{dsfont}
\usepackage{color}
\usepackage{lineno,hyperref}
\usepackage{yfonts}
\usepackage{appendix}
\usepackage{mathtools}
\usepackage{bm}
\usepackage{enumerate}
\usepackage[usenames,dvipsnames]{xcolor}
\usepackage[mathscr]{eucal}

\DeclareMathOperator*{\divv}{div}

\newcommand*{\Bscr}{\mathcal B}
\newcommand*{\Cscr}{\mathcal C}

\newcommand*{\Fscr}{\mathcal F}

\newcommand*{\Pscr}{\mathcal P}

\newcommand*{\N}{\mathbb{N}}
\newcommand*{\E}{\mathbb{E}}
\newcommand*{\R}{\mathbb{R}}

\numberwithin{equation}{section}
\newtheorem{theorem}{Theorem}[section]
\newtheorem{lemma}[theorem]{Lemma}

\newtheorem{proposition}[theorem]{Proposition}
\newtheorem{claim}{Claim}

\theoremstyle{definition}
\newtheorem{definition}[theorem]{Definition}

\theoremstyle{definition}
\newtheorem{remark}[theorem]{Remark}
\newtheorem{example}[theorem]{Example}

\usepackage{xparse}

\ExplSyntaxOn
\NewDocumentCommand{\makeabbrev}{mmm}
 {
  \yoruk_makeabbrev:nnn { #1 } { #2 } { #3 }
 }

\cs_new_protected:Npn \yoruk_makeabbrev:nnn #1 #2 #3
 {
  \clist_map_inline:nn { #3 }
   {
    \cs_new_protected:cpn { #2 } { #1 { ##1 } }
   }
 }
 \ExplSyntaxOff
 
\makeabbrev{\textbf}{tbf#1}{a,b,c,d,e,f,g,h,i,j,k,l,m,n,o,p,q,r,s,t,u,v,w,x,y,z,A,B,C,D,E,F,G,H,I,J,K,L,M,N,O,P,Q,R,S,T,U,V,W,X,Y,Z}

\makeabbrev{\textbf}{bf#1}{a,b,c,d,e,f,g,h,i,j,k,l,m,n,o,p,q,r,s,t,u,v,w,x,y,z,A,B,C,D,E,F,G,H,I,J,K,L,M,N,O,P,Q,R,S,T,U,V,W,X,Y,Z}

\makeabbrev{\textsf}{tsf#1}{a,b,c,d,e,f,g,h,i,j,k,l,m,n,o,p,q,r,s,t,u,v,w,x,y,z,A,B,C,D,E,F,G,H,I,J,K,L,M,N,O,P,Q,R,S,T,U,V,W,X,Y,Z}

\makeabbrev{\mathsf}{mss#1}{a,b,c,d,e,f,g,h,i,j,k,l,m,n,o,p,q,r,s,t,u,v,w,x,y,z,A,B,C,D,E,F,G,H,I,J,K,L,M,N,O,P,Q,R,S,T,U,V,W,X,Y,Z}

\makeabbrev{\mathfrak}{mf#1}{a,b,c,d,e,f,g,h,i,j,k,l,m,n,o,p,q,r,s,t,u,v,w,x,y,z,A,B,C,D,E,F,G,H,I,J,K,L,M,N,O,P,Q,R,S,T,U,V,W,X,Y,Z}

\makeabbrev{\mathrm}{mrm#1}{a,b,c,d,e,f,g,h,i,j,k,l,m,n,o,p,q,r,s,t,u,v,w,x,y,z,A,B,C,D,E,F,G,H,I,J,K,L,M,N,O,P,Q,R,S,T,U,V,W,X,Y,Z}

\makeabbrev{\mathbf}{mbf#1}{a,b,c,d,e,f,g,h,i,j,k,l,m,n,o,p,q,r,s,t,u,v,w,x,y,z,A,B,C,D,E,F,G,H,I,J,K,L,M,N,O,P,Q,R,S,T,U,V,W,X,Y,Z}

\makeabbrev{\mathcal}{mc#1}{A,B,C,D,E,F,G,H,I,J,K,L,M,N,O,P,Q,R,S,T,U,V,W,X,Y,Z}

\makeabbrev{\mathbb}{mbb#1}{A,B,C,D,E,F,G,H,I,J,K,L,M,N,O,P,Q,R,S,T,U,V,W,X,Y,Z}

\makeabbrev{\mathscr}{ms#1}{A,B,C,D,E,F,G,H,I,J,K,L,M,N,O,P,Q,R,S,T,U,V,W,X,Y,Z}

\makeabbrev{\mathrm}{#1}{
Id,id,ran,rk,diag,stab,ann,conv,pr,ev,tr,End,Hom,sgn,im,op,can,fin,ext,%red,tot,
rot,usc,lsc,Lip,LocLip,lip,bSymLip,osc,AC,loc,uloc,spec,coz,z,ul,
supp,Opt,Adm,Cpl,Geo,GeoSel,GeoOpt,GeoAdm,GeoCpl,reg,
bd,co,Ric,Exp,dExp,dist,seg,Seg,cut,fcut,Cut,SDiff,Iso,Isom,diam,cl,Homeo,Diff,Der,vol,dvol,inj,relint, Graph, sub,codim,
var,law,Var,Poi,Gam,pa,so,iso,fs,inv,pqi,mix,
TestF,
}

\newcommand{\Pbb}{\mathbb P}
\newcommand{\dd}{\mathrm d}
\newcommand{\diver}{\operatorname{div}}
\title{Brownian motion in Minkowski normed spaces}
\author{%
    Shin-ichi Ohta\footnote{Department of Mathematics, University of Osaka, Osaka 560-0043, Japan \& RIKEN Center for Advanced Intelligence Project (AIP), 1-4-1 Nihonbashi, Tokyo 103-0027, Japan. E-mail: s.ohta@math.sci.osaka-u.ac.jp}
    \and Marco Rehmeier\footnote{Institute of Mathematics, TU Berlin, 10623 Berlin, Germany. E-mail: rehmeier@tu-berlin.de} 
    \and Kohei Suzuki\footnote{Department of Mathematical Sciences, Durham University, South Road, Durham
DH1 3LE, United Kingdom. E-mail: kohei.suzuki@durham.ac.uk} 
}
\date{}
\begin{document}
	\maketitle

	\begin{abstract}
    A Minkowski normed space is the Euclidean space equipped with a (possibly asymmetric) uniformly convex and smooth norm, forming a particular class of Finsler manifolds. 
    We construct a stochastic process with one-dimensional time marginal densities given by the fundamental solution to the nonlinear Finsler heat equation in Minkowski normed spaces.
    This process is constructed as a solution to a singular McKean--Vlasov stochastic differential equation and constitutes a nonlinear Markov process in the sense of McKean. 
    Furthermore, we show that solutions to this stochastic differential equation are pathwise unique, and thus probabilistically strong solutions, though the equation has singular coefficients beyond the subcritical regime. 
    Since our construction is a natural extension of the construction of standard Brownian motion from the standard heat kernel, we call this process \emph{Brownian motion in Minkowski normed spaces.} 
    To the best of our knowledge, this is the first construction of stochastic processes associated with nonlinear heat equation in Finslerian spaces.
	\end{abstract}
    
	\noindent	\textbf{Keywords:} Minkowski normed space, Finsler manifold, nonlinear heat flow,
    McKean--Vlasov SDE, nonlinear Markov process, probabilistic representation.\\
	\textbf{2020 MSC:} 35G25, %(nonl. PDE initial value problem), 
    60J25, %(Continuous-time Markov processes on general state spaces), 
    58J65 %(Diffusion processes and stochastic analysis on manifolds)

    \bigskip

%\cyan{If we do not assume the eveness, it is not anymore a normed space. The title should be changed.}
%\blue{SO: No, Minkowski norm includes such asymmetric cases.}

\section{Introduction} 
\subsection{Framework and goal}
One of the cornerstones of stochastic analysis and linear PDE theory is the intimate relation between heat equation and Brownian motion, namely that the fundamental solutions to 
\begin{equation}\label{intro:HE}
    \partial_t u = \frac 1 2 \Delta u
\end{equation}
(i.e., the classical heat kernel $p(t,z,x)$) are the one-dimensional time marginal densities of Brownian motion; more precisely, the density of the distribution of $W^z_t:= W_t +z$ is $p(t,z,\cdot)$, where $W$ denotes standard Brownian motion. 
Moreover, the path laws $\{P^z\}_{z\in \R^d}$ of $W^z$ form a Markov process, which is uniquely determined by its transition kernel $p(t,z,x)$. These links, allowing to pose analytic problems for \eqref{intro:HE} as probabilistic ones and vice versa, is fundamental, for instance in classical potential theory (see, among many other references, \cite{BG68,D84,FOT11}), and
inspired a powerful general theory of linear PDEs, stochastic processes and Markov processes. One central aspect of this theory is the construction of probabilistic counterparts for solutions $u$ to large classes of PDEs of type
\[
\partial_t u = \Delta \bigl( a(x) u \bigr) - \nabla \cdot \bigl( b(x) u \bigr)
\]
for $a:\R^d \to [0,\infty)$ and $b: \R^d \to \R^d$. 
Typically, these counterparts are constructed as solutions $X$ to the associated stochastic differential equation 
\[
\dd X_t = b(X_t) \,\dd t + \sqrt{2a(X_t)} \,\dd W_t
\]
such that the distribution of $X_t$ is $u(t,x) \,\dd x$ for all $t\geq 0$ (for the heat equation, one has $a = 1/2$ and $b= 0$).

\paragraph{Heat equation in Minkowski normed spaces.}
A \emph{Minkowski normed space} $(\R^d,\|\cdot\|)$ is the Euclidean space $\R^d$ equipped with a uniformly convex,  smooth and possibly asymmetric norm (i.e., $\|{-x}\| \neq \|x\|$ in general); see Section \ref{sect.2.1}. Since such a norm does not necessarily arise from an inner product, the associated geometry is typically \textit{Finslerian} rather than \textit{Riemannian}, where the infinitesimal structures of the space (i.e., tangent spaces) are not equipped with a scalar product, but with a (possibly asymmetric) norm.
% The aim of this paper is to explore the possibility of extending this deep relation to nonlinear heat equation.
% Precisely, we consider a \emph{Minkowski normed space} $(\R^d,\|\cdot\|)$, by which we mean a Euclidean space $\R^d$ equipped with a uniformly convex and smooth, possibly asymmetric norm $\|\cdot\|$
% (we also assume $d \ge 2$ for a technical reason; see ...).
% This is the simplest example of a (non-Riemannian) Finsler manifold.
In the context of Finsler manifolds, the natural \emph{nonlinear Laplacian} $\Delta$  can be defined analogously to the Riemannian case.
%; see, for instance, %\cite{GeShen,OhtaSturm2009}. 
In the present Minkowski setting, it is explicitly given as
\begin{equation}\label{eq:FLap}
    \Delta u=\sum_{i,j=1}^d \partial_i \bigl( \partial_{ij} Q^*(\nabla u) \partial_j u \bigr),
\end{equation}
where $Q^*$ is the Legendre dual of the Lagrangian $Q:=\|\cdot\|^2/2$ and $\nabla u:=(\partial_1 u,\ldots,\partial_d u)$ ($\partial_iu$ denotes the standard partial derivative of $u$). 
When $\|\cdot\|$ is the standard Euclidean norm, \eqref{eq:FLap} is the classical Laplacian. 
The notable feature is that the operator in \eqref{eq:FLap} is generally {\it nonlinear} ($\Delta(u_1+u_2) \neq \Delta u_1 + \Delta u_2$) and {\it inhomogeneous} ($\Delta(-u) \neq -\Delta u$) due to the non-constancy of $\partial_{ij}Q^*$ and the asymmetry of $Q$. 
Analogously to the classical heat equation, the {\it Finsler heat equation} is 
\begin{equation}\label{eq:FHE}
    \partial_t u = \frac{1}{2} \Delta u.
\end{equation}
In the Minkowski case, the \textit{Finsler heat kernel} centered at $z \in \R^d$ is given by 
\begin{equation}\label{eq:FHK}
f_Q^z(t,x):=\frac{1}{(2t)^{d/2}C_Q}
\exp\biggl(-\frac{Q(z-x)}{t}\biggr),
\end{equation}
with the normalizing constant~$C_Q>0$ (see \eqref{eq:def-kernel} for the precise definition of $C_Q)$. This function solves \eqref{eq:FHE} and is also called the \emph{fundamental solution} to \eqref{eq:FHE} (see \cite[Example 4.3]{OhtaSturm2009}).  
% Because of its similarity to the usual heat kernel on $\R^d$, we will call $f^z_Q$ the \emph{Finsler heat kernel}.

Heat equation on Finslerian spaces has attracted great attention across nonlinear PDEs, geometric analysis and optimal transport. 
From the PDE point of view, although $\Delta$ is nonlinear, it is uniformly elliptic and the existing PDE techniques apply to some extent; we refer to \cite{GeShen,Ohta2017,Ohta2021,Ohta2022,OhtaSturm2009,OhtaSturm2014,OhtaSuzuki2025} for some results. 
However, the noncompact case has certain limitations: some basic geometric and analytic inequalities are known only under additional assumptions; see, e.g., \cite{Ohta2022}.
From the optimal transport point of view, the heat equation~\eqref{eq:FHE} can be regarded as the gradient flow equation for the Dirichlet energy in the $L^2$-space or for the relative entropy in the (reverse) $L^2$-Wasserstein space (see \cite{OhtaSturm2009,OhtaZhao2025}). 

A fundamental missing piece, however, lies on the probabilistic side: unlike the linear heat equation in Riemannian manifolds, where the heat equation is inseparably linked to Brownian motion and stochastic differential equations, the nonlinear Finsler heat equation has so far lacked a comparable stochastic counterpart.

%On the other hand, there are also essential differences from the linear case.
%one of the most crucial difficulties is the lack of contraction (non-expansion) property with respect to the Wasserstein distance (see \cite{OhtaSturm2012}).
%Due to the absence of contraction property, gradient flow theory cannot ensure that $\mu_t:=f^z_Q(t,x) \,\dd x$ extended to $t=0$ as $\mu_0=\delta_z$ is a unique solution to \eqref{eq:FHE} stemming from $\delta_z$.
%Among others, for noncompact Finsler manifolds, some fundamental geometric and analytic inequalities are known only under additional conditions (see, e.g., \cite{Ohta2022}).
%(\red{M: I dont understand the mathematical content of the previous sentence}).
%This prevents further \red{ad hoc} probabilistic studies of Finsler heat flow.

\paragraph{Goal.}A long-standing open question is whether there is a canonical notion of \emph{Brownian motion} in this setting. More fundamentally, one may ask what the appropriate probabilistic counterpart of the nonlinear Finsler Laplacian should be. 

A na\"ive analogy with the classical Riemannian case would suggest defining Brownian motion as the Markov process whose transition semigroup solves the Finsler heat equation. However, this analogy immediately breaks down at a fundamental level: the Finsler heat equation is nonlinear because  the Finsler Laplacian itself is nonlinear while the transition semigroup of any Markov process is necessarily linear. Thus, a classical Markov process cannot have the nonlinear Finsler heat flow as its transition semigroup. This nonlinearity is one of the main obstructions preventing us from defining Brownian motion on Finsler manifolds by an analogy of the Riemannian case. 
Even if one leaves aside Markovianity, it is an open problem how to construct a stochastic process with one-dimensional time marginal densities given by the fundamental solution to the Finsler heat equation.

Our main goal of this paper is to provide an answer to this problem  for Minkowski normed spaces by constructing a unique \textit{nonlinear Markov process} whose one-dimensional time marginal densities are given by the fundamental solution to the Finsler heat equation~\eqref{eq:FHE}, i.e., by \eqref{eq:FHK}. Our approach extends a classical construction of Brownian motion starting from the classical heat equation to the Minkowski setting, therefore, we call this process \textit{Brownian motion in Minkowski normed spaces} (see Definition \ref{def:intro-fbm}). This is, to the best of our knowledge, the first construction of Brownian motion in the Finslerian setting.

The notion of \textit{nonlinear Markov process} used in this paper (see Definition \ref{def:NL-MP}) is based on ideas by McKean \cite{McKean1966} and was recently introduced and studied in \cite{RR25}.
In contrast to classical Markov processes, the transition law may depend not only on the current state, but also on the current distribution of the process.

\subsection{Main results}

\paragraph{Setting.}
In this paper, we work with a Minkowski norm $\|\cdot\|$ on $\R^d$ with $d \ge 2$, being uniformly convex and uniformly smooth, but possibly asymmetric. 
More precisely, in terms of the Lagrangian $Q:=\|\cdot\|^2/2:\R^d \to [0,\infty)$, we impose the following conditions: 
\begin{enumerate}
\item[(H1)]
$Q$ is positively $2$-homogeneous:
\[
Q(cx)=c^2 Q(x), \qquad \forall c>0,\ x\in\R^d;
\]
\item[(H2)] $Q\in C^1(\R^d)\cap C^3(\R^d\setminus\{0\})$; 
\item[(H3)] $\exists 0<\lambda\le \Lambda<\infty$ such that for every $x \in \R^d \setminus \{0\}$, as quadratic forms,
\[
\lambda I\le D^2Q(x)\le \Lambda I.
\]
\end{enumerate}

\paragraph{The McKean-Vlasov SDE and nonlinear Markov property.}
As our main result, we construct a nonlinear Markov process whose one-dimensional time-marginal densities are given by the Finsler heat kernel $\{f_{Q}^z(t,\cdot)\}_{z\in \R^d, t>0}$ as in \eqref{eq:FHK}, i.e., by the fundamental solution to the Finsler heat equation~\eqref{eq:FHE}. We also show that these solutions are probabilistically strong solutions.
To this end, we first associate the Finsler heat equation to a distribution-dependent McKean--Vlasov SDE, and then prove the existence and uniqueness of  probabilistically weak solutions to this SDE. Its coefficients are given as follows:
\begin{equation*}%\label{coeffs-A,B}
    \begin{aligned}
    a_{ij}(x,\varphi) &:= \frac 1 2\partial_{ij} Q^*\bigl(\nabla \varphi (x)\bigr), \\
    b_i(x,\varphi) &:= \partial_j a_{ij}(x,\varphi) = \frac 1 2 \partial_j \Bigl( \partial_{ij}Q^*\bigl(\nabla \varphi(x)\bigr)\Bigr),\quad (x,\varphi) \in \mathbb{R}^d \times \mathcal{C}
    \end{aligned}
\end{equation*}
(see \eqref{coeffs-A,B} for details, in particular for the domain $\mathcal{C}$ of $a_{ij}$ and $b_i$). 
In the following theorem, we summarize the main results of this paper. We denote by $\mathcal{L}(X)$ the distribution of a random variable $X$ ($\mathbb{R}^d$- or path-valued; in the latter case, $\mathcal{L}(X)$ is the path law of a stochastic process $X$).

\begin{theorem}[see Theorems \ref{thm:construction}, \ref{thm:main-result} and Proposition~\ref{prop:strong-sol}] \label{thm1-intro}
Let $d\geq 2$ and assume \textnormal{(H1)--(H3)}. 
For each $z\in \R^d$, the McKean--Vlasov SDE
\begin{equation}\label{intro:DDSDE}
\left\{
\begin{aligned}
    \,\dd X_t &= b\bigl( X_t,u(t) \bigr) \,\dd t + \sqrt{2a \bigl( X_t,u(t) \bigr)} \,\dd W_t,
    \\ \mathcal{L}(X_t) &= u(t,x) \,\dd x,\ \forall t>0,\quad X_0 = z
\end{aligned}
\right.
\end{equation}
 has a unique probabilistically weak solution $X^z$ with the prescribed one-dimensional time marginals $\mathcal{L}(X^z_t) = f^z_Q(t,x) \,\dd x$ for all $t>0$. Furthermore, the path laws $P^z := \mathcal{L}(X^z)$ of $X^z$ form a nonlinear Markov process with one-dimensional time marginal densities $f^z_Q(t,\cdot)$, $(t,z)\in (0,\infty) \times \R^d$. Finally, the solutions $X^z$ are probabilistically strong after every strictly positive time.
\end{theorem}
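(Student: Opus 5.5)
Existence, uniqueness, the nonlinear Markov property and strong solvability can be handled separately, each reduced to a linear problem once the time marginals are fixed as $u(t)=f^z_Q(t,\cdot)$.

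\emph{Existence.} The first step is to check that $u(t,x)=f^z_Q(t,x)$ solves the Fokker--Planck equation attached to \eqref{intro:DDSDE}. Since $b_i=\partial_j a_{ij}$, the generator $\sum_{ij}\partial_{ij}(a_{ij}u)-\sum_i\partial_i(b_i u)$ equals $\sum_{ij}\partial_i(a_{ij}\partial_j u)$, and this is $\tfrac12\Delta u$ by \eqref{eq:FLap}. The heat equation \eqref{eq:FHE} therefore becomes the linear Fokker--Planck equation with frozen coefficients $\bar a(t,x):=a(x,u(t))$ and $\bar b(t,x):=b(x,u(t))$.

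Next, using $\nabla u = -u\,\nabla Q(z-x)/t$ and the 0-homogeneity of $D^2Q^*$, one obtains $\bar a(t,x)=\tfrac12 D^2Q^*(\nabla Q(x-z))$ up to the sign convention of the asymmetry, and this is independent of $t$. By (H3) it is uniformly elliptic: $\Lambda^{-1}I\le D^2Q^*\le\lambda^{-1}I$. The drift, however, is singular: $\bar b(x)=O(|x-z|^{-1})$ because $D^2Q^*$ is 0-homogeneous and smooth away from $0$. This $1/|x|$ singularity is exactly critical, which explains the remark ``beyond the subcritical regime'' and the assumption $d\ge2$.

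Existence of a weak solution then follows from Figalli's superposition principle (or Trevisan's version). The curve $t\mapsto f^z_Q(t,x)\,\dd x$ is a weakly continuous probability solution with $\mu_0=\delta_z$, and the integrability condition $\int_0^T\!\int(|\bar a|+|\bar b|)u\,\dd x\,\dd t<\infty$ holds because $|x-z|^{-1}$ is integrable against a Gaussian-type density in $d\ge2$. This yields a martingale solution, and hence a weak solution of \eqref{intro:DDSDE}.

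\emph{Uniqueness.} Any weak solution of \eqref{intro:DDSDE} with marginals $f^z_Q$ solves the linear SDE with coefficients $(\bar a,\bar b)$. Uniqueness in law therefore reduces to uniqueness of the linear martingale problem, which in turn reduces, by the superposition correspondence, to uniqueness of the linear Fokker--Planck equation in a suitable class started from each $(s,\delta_y)$.

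I would obtain this by a Holmgren-type duality argument: solve the backward Kolmogorov equation $\partial_s v+\bar a:D^2v+\bar b\cdot\nabla v=0$ in divergence form $\partial_s v+\nabla\cdot(\bar a\nabla v)=0$. This is legitimate because $b=\nabla\cdot a$, so the operator is the symmetric divergence-form operator with bounded, uniformly elliptic, measurable coefficients. Aronson estimates and De Giorgi--Nash theory then give a well-defined fundamental solution and uniqueness of weak solutions in the class of measures with locally bounded densities, or even among all measure-valued solutions via the divergence structure; this is the standard Dirichlet-form uniqueness for symmetric operators. I expect this step to be the main obstacle: the passage from divergence-form uniqueness to uniqueness among all martingale solutions has to handle possible mass near the singular point $z$ at small times, and I would treat it by working on $[s,\infty)$ with $s>0$ and letting $s\downarrow0$.

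\emph{Nonlinear Markov property.} With uniqueness at hand, the nonlinear Markov property in the sense of Definition~\ref{def:NL-MP} follows the scheme of \cite{RR25}. Conditioning $P^z$ at time $s$ on $X_s=y$ gives the unique solution of the linear equation with frozen coefficients started at $(s,y)$. Since the family is generated by the flow $\mu\mapsto$ (solution of the nonlinear FPE from $\mu$) and the heat flow \eqref{eq:FHE} has the semigroup property, the flow property and measurability in $z$ follow; measurability comes from uniqueness, via a measurable selection argument.

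\emph{Strong solutions.} For times $t\ge s>0$ I would invoke Yamada--Watanabe, so it suffices to prove pathwise uniqueness. On $[s,\infty)$ the process a.s.\ avoids $z$, since $d\ge2$ and the law has a density. The coefficients $\bar a,\bar b$ are smooth and locally Lipschitz away from $z$, with $\sqrt{2\bar a}$ Lipschitz there by uniform ellipticity. A localization by stopping times at $\tau_\varepsilon=\inf\{t:|X_t-z|\le\varepsilon\}$, combined with the fact that $\tau_\varepsilon\to\infty$ a.s.\ as $\varepsilon\to0$, then gives pathwise uniqueness, and hence strong solutions after every positive time. The non-attainment of $z$ I would prove via a Lyapunov function such as $\log|x-z|$, or a suitable power of $Q^*$ adapted to $\bar a$, exploiting $d\ge2$.
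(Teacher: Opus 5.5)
\paragraph{Verdict.} Your existence step is the paper's (Lemma~\ref{lem:f-solves-NLFPE} plus the superposition principle). The uniqueness step has a genuine gap, and the uniqueness claim, the nonlinear Markov property and the strong-solution claim all rest on it.

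\paragraph{Why the duality route does not close.}
\begin{itemize}
\item The one-dimensional marginals of an arbitrary weak solution of the frozen linear SDE are only known to solve the linearized equation distributionally, i.e.\ against $C^2_c$ test functions as in Definition~\ref{def:lin-FPE}. A Holmgren duality argument must insert the backward solution $v$ as a test function.
\item De Giorgi--Nash--Aronson theory gives $v$ only H\"older continuity and Gaussian bounds. At $z$ the coefficient $A(z-\cdot)$ is discontinuous (unless $\|\cdot\|$ comes from an inner product), and the drift is of order $|x-z|^{-1}$. So $v$ is not $C^{1,2}$ there, and pairing it with a general, possibly singular, measure-valued solution is not justified.
\item Dirichlet-form uniqueness identifies only the symmetric Hunt process attached to the form. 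It says nothing about other weak solutions of the SDE. Uniqueness ``among all measure-valued solutions via the divergence structure'' is not covered by the results you cite for coefficients this singular.
\item You also demand uniqueness from every $\delta_y$, including $y=z$, which is exactly where the singularity is worst. The remark about working on $[s,\infty)$ and letting $s\downarrow0$ is never turned into an argument.
\end{itemize}
Even in the class of solutions bounded by $Cf^z_Q(s+t)$, which is all Lemma~\ref{lem:equiv-extrem-uniqu} requires, the duality route would still need an argument at $z$. For instance, one would have to show that such bounded very weak solutions have finite energy near $z$ (a removable-singularity statement), and this is absent.

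\paragraph{How the paper avoids this.} It never needs uniqueness from $\delta_z$, nor any parabolic regularity.
\begin{itemize}
\item It proves pathwise uniqueness for the linear SDE only with initial law $f^z_Q(s,x)\,\dd x$, $s>0$ (Proposition~\ref{prop:pathwise-unique}).
\item By Yamada--Watanabe and the superposition principle, this gives uniqueness of \emph{all} probability solutions of the linearized equation from $f^z_Q(s,x)\,\dd x$.
\item That yields the extremality in Proposition~\ref{prop:NL-MC-construction}(ii) via Lemma~\ref{lem:equiv-extrem-uniqu}. Proposition~\ref{prop:NL-MC-construction} then delivers uniqueness from $\delta_z$ together with the nonlinear Markov core property.
\end{itemize}
This is the mechanism your $s\downarrow0$ remark hints at. Since $\mathcal{L}(X_s)=f^z_Q(s,x)\,\dd x$ is prescribed, $\mathcal{L}(X_{s+\cdot})$ is determined for every $s>0$, and path continuity then determines $\mathcal{L}(X)$.

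\paragraph{What to change.} Your final paragraph already contains the right pathwise-uniqueness scheme, so use it for uniqueness as well. The non-attainment step, however, must be adapted to the norm.
\begin{itemize}
\item Having a density at each fixed time does not prevent $X$ from hitting $z$.
\item $\log|x-z|$ is not a Lyapunov function in general. The generator $\phi\mapsto\frac12\divv(A(z-\cdot)\nabla\phi)$ maps it to $c(\theta)|x-z|^{-2}$, where $c$ typically changes sign; for $d=2$ it has zero mean over the circle.
\item The right radial variable is $Y_t=2Q(z-X_t)$. By $A(y)\nabla Q(y)=y$ and $y\cdot\nabla Q(y)=2Q(y)$, $Y$ is, up to exit from annuli, a squared Bessel process of dimension $d$ with generator $2y\phi''+d\phi'$.
\item The scale functions $y^{1-d/2}$ ($d>2$) and $\log y$ ($d=2$) then give $\Pbb(X_t\neq z\ \forall t\le T)=1$ whenever $\Pbb(X_0=z)=0$.
\end{itemize}
For these identities the exact frozen coefficient matters. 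Since $\nabla f^z_Q(t,x)=\frac1t f^z_Q(t,x)\nabla Q(z-x)$, the frozen diffusion is $\frac12A(z-x)$. Because $Q$ is asymmetric, writing $D^2Q^*(\nabla Q(x-z))$ instead is not a harmless sign convention.
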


% \cyan{KS.When $d=1$, the norm is trivialised (i.e., becomes the standard Euclidean norm), so it is anyway not our main interest. We can simply remove the following paragraph?}
% \blue{SO: Removing the evenness gives a nonstandard structure even when $d=1$.}
% \cyan{KS. I would suggest not mentioning $d=1$ because I am not sure if Lemma 2.7 is only the part we need to modify, and also I am not aware of interesting examples with $d=1$ If we assume symmetry of the norm, it is the Euclidean norm. So, I am not sure if it is really a good idea to spare several lines why we cannot apply our proof for $d=1$.}
% To also cover the case $d=1$, one needs to modify the proof of integrability for $b$ within the proof of Lemma \ref{lem:f-solves-NLFPE}, but we refrain from doing so here. 
% That $a$ and $b$ depend on the distribution of the solution is a reflection of the nonlinearity of the Finsler heat equation. 

\paragraph{Outline of the proof.}
The proof splits into the existence and the uniqueness of solutions to the McKean--Vlasov SDE.
\smallskip

{\it Existence.}
The proof proceeds in three steps: first, we recast the Finsler heat equation as the \emph{nonlinear Fokker--Planck equation}
\begin{equation}\label{intr:nlFPE}
    \partial_t u(t,x) = \partial_{ij}\Bigl( a_{ij}\bigl( x,u(t) \bigr) u(t,x) \Bigr) - \partial_i \Bigl( b_i \bigl( x,u(t) \bigr) u(t,x) \Bigr);
\end{equation}
secondly, we prove that $f_Q^z$ is a distributional solution to~\eqref{intr:nlFPE} (Lemma \ref{lem:f-solves-NLFPE}); 
thirdly, we construct a weak solution to~\eqref{intro:DDSDE} by aid of the Ambrosio--Figalli--Trevisan \emph{superposition principle}, which allows to construct a solution to~the McKean--Vlasov SDE~\eqref{intro:DDSDE} with one-dimensional time marginal densities given by solutions to~\eqref{intr:nlFPE} (see Proposition \ref{prop:SP} below, \cite{Trevisan16} and also \cite{BR18,BR18_2}).

\smallskip

% {\it Uniqueness.} As our second main result, we employ and further develop results on the existence of nonlinear Markov processes with prescribed one-dimensional time marginals initiated in \cite{RR25} in order to prove that the solutions from Theorem \ref{thm1-intro} form a nonlinear Markov process. A key ingredient (and a result of independent interest) is the uniqueness of the solutions constructed in Theorem \ref{thm1-intro}.

{\it Uniqueness.} To prove the uniqueness, it suffices to show the uniqueness for the \emph{linearized Fokker--Planck equation} 
\begin{equation}\label{eqI:lin-FPE}
     \partial_t u(t,x) = \partial_{ij}\Bigl(a_{ij}\bigl(x,f^z_Q(s+t)\bigr) u(t,x) \Bigr)   -\partial_i \Bigl(b_i\bigl(x,f^z_Q(s+t)\bigr) u(t,x) \Bigr)
\end{equation}
for all $s>0$ (see Proposition~\ref{prop:NL-MC-construction} and Lemma~\ref{lem:equiv-extrem-uniqu} for details).

To this end, we prove the strictly stronger result of pathwise uniqueness for the associated (non-distribution dependent) SDE with prescribed one-dimensional time marginals $f^z_Q(s+t,x) \,\dd x$, i.e.,
% \begin{equation}\label{I:SDE-linear}
%     \dd X_t = \frac 1 2 \divv \Bigl(D^2 Q^*\bigl(\nabla Q(z-X_t)\bigr)\Bigr) \,\dd t + \sqrt{D^2Q^*\bigl(\nabla Q(z-X_t)\bigr)} \,\dd W_t.
% \end{equation} 
\begin{equation} \label{I:SDE-linear}
\left\{
\begin{aligned}
    \dd X_t 
    &= b\bigl(X_t,f^z_Q(s+t)\bigr) \,\dd t + \sqrt{2a \bigl(X_t,f^z_Q(s+t)\bigr)} \,\dd W_t
   % \\ &=\frac 1 2 \divv \Bigl(D^2Q^*\bigl(\nabla f^z_Q(s+t,X_t)\bigr)\Bigr) \,\dd t + \sqrt{D^2Q^*\bigl(\nabla f^z_Q(s+t,X_t)\bigr)} \,\dd W_t
    \\&= \frac 1 2 \divv\Bigl(D^2Q^*\bigl(\nabla Q(z-X_t)\bigr)\Bigr) \,\dd t + \sqrt{ D^2Q^*\bigl(\nabla Q(z-X_t)\bigr)} \,\dd W_t,
    \\ \mathcal{L}(X_t) &= f^z_Q(s+t,x) \,\dd x,\ \forall t>0
\end{aligned}
\right.
\end{equation}
(see Lemma \ref{lem:nice-form} for the second equality).
% Note that, while \eqref{I:SDE-linear} is a classical SDE in the sense that the coefficients do not depend on the distributions, the coefficients of \eqref{I:SDE-linear} are generally singular in the sense that they do not belong to the sub-critical regime (i.e., the singular drift is not sufficiently controlled by the regularizing effect of Brownian noise under the natural parabolic scaling). Indeed, we can explicitly choose a Minkowski norm~\(Q\) that induces  critical or super-critical coefficients (\cyan{see, Example~??}). Therefore, the SDE does not fall within the well-established singular SDE framework of Krylov--R\"ockner~\cite{KrylovRoeckner2005}, nor, to the best of our knowledge, within any existing framework that would directly yield pathwise uniqueness. 

We point out that the SDE~\eqref{I:SDE-linear} in~$\R^d$ does not fall within the
standard subcritical framework for the pathwise uniqueness of singular SDEs in Krylov--R\"ockner~\cite{KrylovRoeckner2005} nor within further developments~\cite{Grube2025,KinzebulatovMadou2024,Krylov2023Morrey,LingRoecknerZhu2019,vonderLuehe2018}. 
Indeed, our drift part can be as singular as 
\[
\Bigl| \divv\Bigl(D^2Q^* \bigl(\nabla Q(z-x) \bigr)\Bigr) \Bigr| \sim \frac{1}{|z-x|}
\qquad\text{as }x\to z
\]
(see Remark \ref{r:BS}, Example~\ref{ex:Randers}).
Thus the drift is locally in \(L^p\) only for \(p<d\). In particular, it lies outside the usual subcritical regime
\[
\frac{d}{p}+\frac{2}{q}<1,
\]
which requires \(p>d\). 
Moreover, the diffusion coefficient is also singular: its gradient can be as singular as 
\[
\left|\nabla \sqrt{D^2Q^*\bigl(\nabla Q(z-x) \bigr)}\right|\sim \frac{1}{|z-x|} \qquad \text{as }x\to z.
\]

Our pathwise uniqueness proof consists of two main steps. 
In the first step, the equation is localized away from the singularity. 
On each compact annulus excluding the singular point, the coefficients are sufficiently regular, so that the standard Gr\"onwall-type argument implies uniqueness up to the corresponding exit time. 
In the second step, we prove that the singular point is in fact almost surely unattainable. 
This is achieved by studying an appropriate radial process and deriving a Bessel-type estimate showing that the hitting probability of the singularity is zero. 
Since the solutions never reach the singularity, the local uniqueness argument extends to the entire time interval, and pathwise uniqueness follows.

We note that $s>0$ is essential and that we cannot (and do not need to) extend the proof to $s=0$ (the latter corresponds to the initial datum $\delta_z$, but our pathwise uniqueness proof crucially depends on the absolute continuity of the initial condition; see Remark \ref{rm:s>0}). 

We emphasize that our pathwise uniqueness result yields uniqueness for the linearized PDE \eqref{eqI:lin-FPE} in the a priori much larger class of \emph{all} probability solutions. 
This is stronger than what is needed to apply Lemma~\ref{lem:equiv-extrem-uniqu} from \cite{RR25}, for which it suffices to prove uniqueness only within the class of solutions that are dominated by \(f_Q^z\).

The fact that the corresponding unique solution path laws constitute a nonlinear Markov process then follows as a consequence of this uniqueness result.

Finally, using the Yamada--Watanabe theorem, we obtain that the solution $X^z$ to the McKean--Vlasov SDE~\eqref{intro:DDSDE} is the unique probabilistically \emph{strong} solution after any strictly positive time $s>0$, i.e., $(X^z_t)_{t \geq s}$ is adapted to the canonical filtration of $W_{s+t}-W_s$, where $W$ denotes the driving Brownian motion.

\paragraph{Conclusion.} 
We construct a new family of stochastic processes $X^z$ as a natural probabilistic counterpart to the Finsler heat kernel associated with a large class of smooth uniformly convex Minkowski norms $\|\cdot\|=\sqrt{2Q(\cdot)}$ on $\R^d$. 
These processes are constructed as the unique weak solutions to the McKean--Vlasov stochastic differential equation \eqref{intro:DDSDE} with Finsler heat kernel as prescribed one-dimensional time marginal densities, form a uniquely determined nonlinear Markov process and are actually probabilistically strong solutions after any strictly positive time.
Our construction generalizes the construction of standard Brownian motion as the unique Markov process with one-dimensional time marginal densities given by the classical heat kernel.
Indeed, for the special case $Q(\cdot) = |\cdot|^2/2$, where $|\cdot|$ denotes the usual Euclidean norm, \eqref{intro:DDSDE} becomes $\dd X_t =\dd W_t$, and $X^z$ from Theorem \ref{thm1-intro} becomes $X^z = W +z$ for the standard Brownian motion $W$.
%This close analogy suggests the following definition.
Motivated by this close analogy, we propose the following definition.
\begin{definition}[see Definition \ref{def:FBm}]\label{def:intro-fbm}
    We call $X^z$ from Theorem \ref{thm:main-result} \emph{Brownian motion starting at $z$ in the Minkowski normed space \((\R^d, \|\cdot\|)\)}.
\end{definition}

\paragraph{Literature comparison.} 
In a series of recent papers, nonlinear Markov processes with one-dimensional time marginal densities given by (fundamental) solutions to nonlinear PDEs have been constructed for the porous medium \cite{RR25}, $2D$ vorticity Euler \cite{Rehmeier20242DVE} and Navier--Stokes \cite{BRZ23}, $p$-Laplace \cite{R.BR24-pLapl}, and Leibenson equations \cite{BarbuGrubeRehmeierRoeckner2025Leibenson}. 
In each case, the nonlinear Markov process consists of unique solution path laws to a McKean--Vlasov stochastic differential equation associated with the given nonlinear PDE. 
The techniques used in the present work are substantially different from those in these papers, where the construction of nonlinear Markov processes with prescribed one-dimensional time marginal densities relied on analytic proofs of uniqueness results for associated linear PDEs, instead of the stronger probabilistic pathwise uniqueness result beyond the subcritical regime developed in the present work.

\paragraph{Organization.}
In Section \ref{sect:Finsler-stuff}, we review some properties of $Q$, and show that $f^z_Q$ solves the nonlinear Fokker--Planck equation.
Section \ref{sect:McKean-V-SDE} introduces the associated McKean--Vlasov stochastic differential equation and our first main result (Theorem \ref{thm:construction}), while Section \ref{sect:NLMP-stuff} recalls the notion of nonlinear Markov process and contains our second main result (Theorem \ref{thm:main-result}) as well as our definition of Brownian motion in Minkowski normed spaces (Definition \ref{def:FBm}). 
In Section \ref{sect:proof}, we prove Theorem \ref{thm:main-result}, which in particular amounts to proving the pathwise uniqueness result of Proposition \ref{prop:pathwise-unique}.
In Section \ref{sect:strong}, we prove that the weak solutions constructed in Theorem \ref{thm:construction} are strong after any strictly positive time.
Finally, in Section \ref{sec:E}, we present some examples of Minkowski normed spaces to which our results apply.

\paragraph{Notation.}
We denote by $S^{d-1} \subset \R^d$ the unit sphere in $\R^d$, by $I$ the $(d \times d)$-identity matrix, and by $A^T$ the transpose of a $d \times d$-matrix $A$.
%(whenever the dimension $d$ is clear from context). 
For $x,y \in \R^d$, we denote by $|x|$ and $x \cdot y$ the Euclidean norm and inner product, respectively.
For a differentiable function $f$ on $\R^d$, we write $\nabla f:=(\partial_1 f, \ldots, \partial_d f)$ (instead of the Finsler notion of gradient vector).

We write $\delta_z$ for the Dirac measure at $z\in \R^d$ and $\Pscr$ for the set of Borel probability measures on $\R^d$. As already used in the introduction, we denote by $\mathcal{L}(X)$ the distribution of an $\mathbb{R}^d$-valued or path-valued random variable, i.e., in the latter case, $\mathcal{L}(X)$ denotes the path law of a stochastic process $X$ with paths in $C([0,\infty);\mathbb{R}^d)$.

For an open set $U \subset \R^d$ and $k \in \N \cup \{\infty\}$, we denote by $C^k(U)$ ($C^k_c(U)$, respectively) the usual spaces of $k$-times continuously differentiable functions $\varphi: U \to \R$ (of compact support, respectively), and by $C^k(U;\R^d)$ the space of $k$-times continuously differentiable vector fields $\varphi: U \to \R^d$. 
We write $D^2\varphi$ for the Hessian of a twice differentiable function $\varphi: \R^d \to \R$. 
For a measure space $(\Omega, \Fscr, \mu)$ and $k \in \N$, we write $L^k(\Omega; \mu)$ for the usual space of measurable functions $g: \Omega \to \R$ with $\int_{\Omega} |g|^k \,\dd\mu < \infty$, and $L^p_{\textup{loc}}(\Omega; \mu)$ for the corresponding spaces of \emph{locally} integrable functions.

Throughout, we use Einstein summation convention.

\paragraph{Acknowledgements.}
SO is supported by the JSPS Grant-in-Aid for Scientific Research (KAKENHI) 22H04942, 24K00523, 24K21511, 26H01996, and by the JST CREST JPMJCR25Q2.
He is also grateful to Universit\"at Bonn for its hospitality during his visit in Summer 2026, a part of this work was carried out there.

\section{Finsler heat equation as a nonlinear Fokker--Planck equation}\label{sect:Finsler-stuff}

\subsection{Minkowski normed spaces}\label{sect.2.1}

Let $d\geq 2$ and assume throughout that $Q: \R^d \to [0,\infty)$ satisfies the following properties (the same as already listed in the introduction).
\begin{enumerate}[(H1)]
\item
$Q$ is positively $2$-homogeneous:
\[
Q(cx)=c^2 Q(x), \qquad \forall c>0,\ x\in\R^d;
\]
\item
$Q\in C^1(\R^d)\cap C^3(\R^d\setminus\{0\})$; 
\item
$\exists 0<\lambda\le \Lambda<\infty$ such that for every $x \in \R^d \setminus \{0\}$, as quadratic forms,
\[
\lambda I\le D^2Q(x)\le \Lambda I.
\]
\end{enumerate}

Under (H1)--(H3), $\|\cdot\|:=\sqrt{2Q}$ provides a uniformly convex and smooth Minkowski norm (see, e.g., \cite{Ohta2021}). We call $(\R^d, \|\cdot\|)$ a \emph{Minkowski normed space}. 
Regarding (H3), the existence of an upper constant~$\Lambda$ follows from the $C^2$-regularity of $Q$; thus the existence of a positive lower bound $\lambda$ (i.e., the uniform convexity of $Q$) is the actual assumption in (H3). 
See Section~\ref{sec:E} for some examples. 
Note that the positive definite matrix $D^2 Q$ gives the fundamental tensor in Finsler geometry (see \cite{BCS,Ohta2021}) and coincides with the canonical inner product when $\|\cdot\|$ is the standard Euclidean norm.
Since $D^2Q$ is positively $0$-homogeneous ($D^2Q(cx)=D^2Q(x)$ for $c>0$) as seen in the next lemma, we can equivalently assume (H3) only for all $x \in S^{d-1}$.

% \begin{example}
%   $N(x) = ||x||$, \cyan{KS. I know a couple of non-trivial examples. I will write about them. It would be also good to ask Shin-chi.}
% \end{example}

\begin{lemma}[Properties of $Q$]
\label{lem:structure} 
Under \textnormal{(H1)--(H3)}, the following hold.
\begin{enumerate}[(i)]%[label=\textnormal{(\alph*)}]
\item $D^2Q$ is positively $0$-homogeneous on $\R^d\setminus\{0\}$, and $A$ defined as 
\[
A(x):=\bigl( D^2Q(x)\bigr)^{-1} \quad (x\neq 0), \qquad  A(0):= I,
\]
is bounded, Borel measurable, symmetric and uniformly elliptic: 
\[
\frac{1}{\Lambda}I\le A(x)\le \frac{1}{\lambda}I
\qquad \forall x \in \R^d\setminus \{0\};
\]
\item
$x\cdot\nabla Q(x)=2Q(x)$ for all $x \in\R^d$;
\item
We have
\begin{equation}\label{eq:AgradQ}
A(x)\nabla Q(x)=x,
\qquad \forall x \in \R^d\setminus \{0\};
\end{equation}
\item
For $B = (B_1,\dots,B_d)$ with $B_i(x):= \partial_j A_{ij}(x)$, $x \in \R^d \setminus \{0\}$, there exists $C>0$ such that
\begin{equation}\label{eq:b-sing}
|B(x)|\le \frac{C}{|x|}
\qquad \forall x \in \R^d\setminus \{0\}.
\end{equation}
\end{enumerate}
\end{lemma}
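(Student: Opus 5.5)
All four items follow from homogeneity (Euler's identity) together with the regularity in (H2) and the bounds in (H3). I would treat them in order; the only place needing care is keeping track of the homogeneity degree of each derivative, in particular for (iv).

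\emph{Item (i).} Differentiate $Q(cx)=c^2Q(x)$ twice in $x$ for $x\neq 0$ and $c>0$. This gives $c^2 D^2Q(cx)=c^2D^2Q(x)$, so $D^2Q$ is positively $0$-homogeneous on $\R^d\setminus\{0\}$. On $\R^d\setminus\{0\}$, $D^2Q$ is symmetric and continuous by (H2), and $\lambda I\le D^2Q\le\Lambda I$ by (H3). Hence its inverse $A$ is symmetric and continuous there and satisfies $\Lambda^{-1}I\le A\le\lambda^{-1}I$, by operator monotonicity of inversion on positive definite matrices. Setting $A(0)=I$ changes $A$ at a single point, so $A$ is Borel measurable and bounded.

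\emph{Item (ii).} Differentiate $Q(cx)=c^2Q(x)$ in $c$ at $c=1$ (Euler's identity), using $Q\in C^1$. At $x=0$ both sides vanish, since $Q(0)=0$ by homogeneity.

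\emph{Item (iii).} For $x\neq 0$, $\nabla Q$ is positively $1$-homogeneous. Euler's identity applied to each component $\partial_iQ$, which is $C^2$ away from $0$, gives $D^2Q(x)\,x=\nabla Q(x)$. Applying $A(x)$ to both sides yields \eqref{eq:AgradQ}.

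\emph{Item (iv).} On $\R^d\setminus\{0\}$, $D^2Q$ is $C^1$ by (H2), so $A$ is $C^1$ with
\[
\partial_k A=-A\,(\partial_k D^2Q)\,A .
\]
Differentiating the $0$-homogeneity $D^2Q(cx)=D^2Q(x)$ in $x$ shows that $\partial_k D^2Q$ is positively $(-1)$-homogeneous:
\[
\partial_kD^2Q(x)=|x|^{-1}\,\partial_kD^2Q\bigl(x/|x|\bigr).
\]
Its continuous restriction to the compact sphere $S^{d-1}$ is bounded, say by $M$. Combining this with $|A|\le\lambda^{-1}$ gives
\[
|\partial_kA_{ij}(x)|\le \lambda^{-2}M|x|^{-1}.
\]
Summing over $j$ in $B_i=\partial_jA_{ij}$ then gives \eqref{eq:b-sing} with $C$ of order $d^{3/2}\lambda^{-2}M$.

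I expect no real obstacle. The one subtle point is (iv): $C^3$ regularity is used only away from the origin, and the $1/|x|$ rate comes purely from the degree $-1$ homogeneity combined with compactness of the sphere, not from any global bound.
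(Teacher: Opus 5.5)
Your proposal is correct and follows essentially the same route as the paper: Euler's identity for (ii) and (iii), 0-homogeneity of $D^2Q$ plus (H3) for (i), and $(-1)$-homogeneity of first derivatives plus compactness of the sphere for (iv). The only cosmetic difference is in (iv): you pass through $\partial_k A=-A(\partial_kD^2Q)A$, whereas the paper observes directly that $A$ is $C^1$ and $0$-homogeneous away from the origin, so its derivatives are $(-1)$-homogeneous.
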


\begin{proof}
The symmetry and measurability of $A$ are obvious.
Since $Q$ is positively $2$-homogeneous,  $D^2Q$ is positively $0$-homogeneous on $\R^d\setminus\{0\}$.
Combined with~(H3), this gives the claimed uniform ellipticity.

By differentiating the $2$-homogeneity~$Q(cx)=c^2Q(x)$ in the radial parameter~$c$ at~$c=1$, we have 
\[
x\cdot\nabla Q(x)=2Q(x), \qquad x \neq 0,
\]
and this identity extends to $x=0$ by
continuity.
Differentiation then yields
\[
D^2Q(x)x=\nabla Q(x),
\qquad x\ne 0,
\]
which implies \eqref{eq:AgradQ}.
Finally, since $A\in C^1(\mathbb{R}^d \setminus \{0\})$ (by $Q \in C^3(\mathbb R^d \setminus \{0\})$) and is positively $0$-homogeneous on $\R^d\setminus\{0\}$, its first derivatives are positively $(-1)$-homogeneous on $\R^d\setminus\{0\}$.
Hence, $|\nabla A(x)|\le C|x|^{-1}$ for some $C>0$, which gives \eqref{eq:b-sing}.
\end{proof}

\begin{remark} \label{r:BS}  
In the case of the standard Euclidean norm, since $D^2Q(x)$ is independent of $x$, one has $B\equiv 0$. 
In general, however, $B$ does not vanish (see Example \ref{ex:Randers}(a) for an example) and then, by the $(-1)$-homogeneity, it is as singular as $|B(x)| \sim 1/|x|$. 
\end{remark}

% \begin{remark}
% We stress that $B(x)=0$ for the Euclidean norm
% For example, if $d=2$ and $\partial_j A_{ij} \equiv 0$ holds on $\R^2 \setminus \{0\}$ for all $i,j=1,2$, then $A_{ij}$ is constant on $\R^2 \setminus \{0\}$.
% This implies that $D^2Q$ is constant on $\R^2 \setminus \{0\}$, and thus $Q$ necessarily comes from an inner product.
% \end{remark}

Denote by $Q^*$ the Legendre dual of $Q$, i.e.,
\[
Q^*(\xi) := \sup_{x\in \R^d}\{\xi \cdot x - Q(x)\},\quad \xi \in \R^d.
\]
%Recall that $2$-homogeneity as well as the regularity from (H2) is inherited from $Q$ to $Q^*$. Consequently, $\partial^2_{ij} Q$ and $\partial_{ij}^2 Q^*$ are $0$-homogeneous for all $1\leq i,j \leq d$, i.e.
%\begin{equation}
 %   \partial_{ij} Q(\lambda x) = \partial_{ij} Q(x), \quad\quad \partial_{ij} Q^*(\lambda x) = \partial_{ij} Q^*(x),\quad \forall \alpha \geq 0, x \in \R^d.
%\end{equation}
%Hence, for any $x\in \R^d\setminus \{0\}$,
%$D^2Q(x) = D^2Q(|x|\frac{x}{|x|}) = %D^2Q(\frac{x}{|x|})$. With (H3), this %implies
%\begin{equation}
%    \lambda I \leq D^2Q(x) \leq \Lambda I, \quad \forall x \in \R^d \setminus \{0\}.
%\end{equation}

\subsection{Finsler heat equation}

With $Q$ and $||\cdot||$ as above, the \emph{Finsler heat equation} on $(\R^d,\|\cdot\|)$ is given as in \eqref{eq:FLap}--\eqref{eq:FHE}:
%\cyan{KS. Having $\frac12$ is natural in relation to the standard Brownian motion. I will carefully check if $\frac12$ is not missing in the rest, but please also check it, Marco and Shin-ichi, by yourselves once more before the submission.}
\begin{align} 
   \notag  \partial_t u &= \frac 1 2 \Delta u\\ 
   \notag  &= \frac 1 2 \partial_i \bigl( \partial_i Q^*(\nabla u)\bigr)
     \\& \label{FT}= \frac 1 2 \partial_i \bigl(  \partial_{ij}Q^*(\nabla u) \partial_j u \bigr).
\end{align}
The final equality holds, since the 1-homogeneity of $\partial_i Q^*$ and~(ii) of Lemma~\ref{lem:structure} imply 
\begin{equation}\label{x}
    \partial_i Q^*(\xi) = \xi_j \partial_{ij} Q^*(\xi),\quad \forall \xi \in \R^d \setminus \{0\}.
\end{equation} 
We refer to \cite{Ohta2021,OhtaSturm2009} for the Finsler heat equation on general (measured) Finsler manifolds (without the coefficient $1/2$).

The fundamental solution to \eqref{FT} (with initial point source $z\in \R^d$) is the \emph{Finsler heat kernel}
\begin{equation}\label{eq:def-kernel}
f_Q^z(t,x):=\frac{1}{(2t)^{d/2}|B_Q|\Gamma(1+\frac{d}{2})}
\exp\biggl(-\frac{Q(z-x)}{t}\biggr),\quad (t,x) \in (0,\infty)\times \R^d,
\end{equation}
with $B_Q:=\{x\in\R^d \,|\, Q(x)<1/2\}$, which has finite Lebesgue measure $|B_Q| < \infty$ (see \cite[Example~4.3]{OhtaSturm2009}).
We collect some properties of $f^z_Q$ used in the sequel.

\begin{lemma}[Properties of $f^z_Q$] \ 
\label{lem:props-f}
    \begin{enumerate}[(i)]
        \item $f^z_Q(t) := f^z_Q(t,\cdot)$ is a probability density for all $t>0$, $t\mapsto f^z_Q(t,x)\,\dd x$ is weakly continuous on $(0,\infty)$ in the sense of measures, and $f^z_Q(t,x) \,\dd x \to \delta_z$ weakly as $t \to 0$.
        \item $f^z_Q(t) \in C^1(\R^d)$ and $\nabla f^z_Q(t,x) = \frac{1}{t} f^z_Q(t,x) \nabla Q(z-x)$ for all $t>0$.
        \item Let $\varphi \in C^1_c(\R^d)$ and $t>0$. Then
        \begin{align*}
            &\int_{\R^d} \varphi(x) f^z_Q(t,x) \,\dd x - \varphi(z) 
            \\&= -\frac 1 2\int_0^t \int_{\R^d}  \nabla Q^*\bigl(\nabla f^z_Q(r,x)\bigr) \cdot \nabla \varphi(x) \,\dd x \dd r \\
            &= -\frac 1 2 \int_0^t \int_{\R^d}  \partial_{ij}Q^* \bigl(\nabla f^z_Q(r,x)\bigr) \partial_j f^z_Q(r,x) \partial_i \varphi(x) \,\dd x \dd r.
        \end{align*}
    \end{enumerate}
\end{lemma}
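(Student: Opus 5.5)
For (i), I will use the change of variables $y=(z-x)/\sqrt t$ together with the $2$-homogeneity (H1). This reduces the total mass of $f^z_Q(t)$ to $(2t)^{-d/2}t^{d/2}\bigl(|B_Q|\Gamma(1+\frac d2)\bigr)^{-1}\int_{\R^d} e^{-Q(y)}\,\dd y$. The remaining integral I will compute by the layer-cake formula. Homogeneity gives $|\{Q<s\}|=(2s)^{d/2}|B_Q|$, hence
\[
\int e^{-Q}\,\dd y=\int_0^\infty e^{-s}\,\dd|\{Q<s\}| = 2^{d/2}|B_Q|\,\tfrac d2\,\Gamma(\tfrac d2)=2^{d/2}|B_Q|\Gamma(1+\tfrac d2),
\]
so the mass equals one. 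The bound $Q(y)\ge \frac\lambda2|y|^2$ (from (H3), since $Q(0)=0$ and $\nabla Q(0)=0$) makes $|B_Q|$ finite. Weak continuity on $(0,\infty)$ follows from dominated convergence. The same substitution gives $\int\varphi f^z_Q(t)\,\dd x=\int \varphi(z-\sqrt t y)\,\tilde f(y)\,\dd y\to\varphi(z)$, where $\tilde f$ denotes the normalized density $e^{-Q(y)}/\int e^{-Q}$.

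For (ii), the regularity is immediate since $Q\in C^1$. The chain rule gives $\nabla_x e^{-Q(z-x)/t}=\frac1t e^{-Q(z-x)/t}\nabla Q(z-x)$, which is the stated formula.

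For (iii), the second equality follows from \eqref{x} applied at $\xi=\nabla f^z_Q$ wherever $\nabla f^z_Q\neq 0$. Where $\nabla f^z_Q=0$, both sides vanish, provided we use $\nabla Q^*(0)=0$ and interpret the second integrand as $0$; the Hessian $D^2Q^*$ is bounded, so this is consistent. For the first equality, I will first compute $\nabla Q^*(\nabla f)$. Since $\nabla Q^*$ is $1$-homogeneous and inverts $\nabla Q$ (Legendre duality), we get
\[
\nabla Q^*(\nabla f^z_Q)=\tfrac1t f^z_Q\,\nabla Q^*(\nabla Q(z-x))=\tfrac1t f^z_Q\,(z-x).
\]
Next I will verify pointwise for $t>0$ that
\[
\partial_t f^z_Q=\tfrac12\diver\bigl(\tfrac1t f^z_Q(z-x)\bigr).
\]
The left side is $f\bigl(-\frac d{2t}+\frac{Q(z-x)}{t^2}\bigr)$. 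On the right, the divergence of $(z-x)$ is $-d$, and using Lemma \ref{lem:structure}(ii) we have $\nabla f\cdot(z-x)=\frac1t f\,\nabla Q(z-x)\cdot(z-x)=\frac{2}{t}fQ(z-x)$. Multiplying by $\frac1{2t}$ gives exactly the left side. The flux is $C^1$ in $x$, so testing against $\varphi$ and integrating by parts gives
\[
\frac{\dd}{\dd r}\int\varphi f(r)=-\frac12\int\nabla Q^*(\nabla f)\cdot\nabla\varphi
\]
for $r>0$. I will integrate this over $r\in[\varepsilon,t]$ and let $\varepsilon\to0$, using (i) for the boundary term at $r=\varepsilon$.

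The only delicate step is integrability of the right side near $r=0$. The substitution gives
\[
\int \frac1r f(r,x)|z-x|\,\dd x=O(r^{-1/2}),
\]
which is integrable on $(0,t)$. This justifies the limit, and by dominated convergence the time differentiation under the integral as well.
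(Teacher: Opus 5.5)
Your proposal is correct and follows the paper's route: (i) and (ii) are the direct computations the paper alludes to, and (iii) rests, as in the paper, on $f^z_Q$ solving \eqref{eq:FHE} together with \eqref{x}. The only difference is that the paper simply cites the weak-solution property, whereas you verify it directly via the flux identity $\nabla Q^*(\nabla f^z_Q)=\frac1t f^z_Q\,(z-x)$ and use the $O(r^{-1/2})$ bound to justify the limit $r\downarrow 0$, which makes your argument self-contained.
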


\begin{proof}
\begin{enumerate}[(i)]
    \item $\int_{\R^d} f^z_Q(t,x) \,\dd x = 1$ for all $t>0$ follows from a straightforward calculation, while the claimed weak continuity on $(0,\infty)$ follows for instance from the continuity of $f^z_Q$ in $(t,x)$.
    The final claim about weak convergence as $t\to 0$ can be proven analogously to the proof that a standard normal distribution with covariance $t I$ converges to $\delta_0$ in the sense of distributions as $t \to 0$.
    \item Both claims follow directly from the definition of $f^z_Q$.
    \item The first identity follows from the fact that $f^z_Q$ is a weak solution to \eqref{eq:FHE}, and the second one from \eqref{x}. \qedhere
\end{enumerate}
\end{proof}

\subsection{Associated nonlinear Fokker--Planck equation}

We introduce coefficients $a = (a_{ij})_{1 \leq i,j \leq d}$, $b = (b_1,\dots,b_d)$, defined on $\R^d \times \mathfrak{C}$ (the latter is defined below),
\begin{equation}\label{coeffs-A,B}
    \begin{aligned}
    a_{ij}(x,\varphi) &:= \frac 1 2\partial_{ij} Q^*\bigl(\nabla \varphi (x)\bigr), \\
    b_i(x,\varphi) &:= \partial_j a_{ij}(x,\varphi) = \frac 1 2 \partial_j \Bigl( \partial_{ij}Q^*\bigl(\nabla \varphi(x)\bigr)\Bigr),
    \end{aligned}
\end{equation}
with $a_{ij}(x,\varphi) := \frac{1}{2} \delta_{ij}$ and $b_i(x,\varphi) := 0 $ for $(x,\varphi)$ with $\nabla \varphi(x) = 0$ (this is necessary since $\partial_i Q^*$ is not differentiable at $0$). 
Here, 
\[
\mathfrak{C} := \{\varphi \in C^1(\R^d) \,|\, \nabla \varphi \in C^1(\R^d\setminus O_{\nabla \varphi};\R^d)\}, 
\]
and we define the closed set $O_{\nabla \varphi} := \{x \in \R^d \,|\, \nabla \varphi (x) = 0\}$. 
Since $a(x,\varphi)$ is symmetric, we have $b(x,\varphi) = \divv a(x,\varphi).$

\begin{remark}\label{rem:f-in-C}
$f^z_Q(t) \in \mathfrak{C}$ for all $z\in \R^d$ and $t >0$. 
This follows from Lemma \ref{lem:props-f}(ii), $O_{\nabla Q}= \{0\}$, and $\nabla Q \in C^2(\R^d\setminus \{0\};\R^d)$.
\end{remark}

The next lemma shows that, for $\varphi = f^z_Q(t)$, the coefficients in \eqref{coeffs-A,B} take a particular appealing form independent of $t>0$.

\begin{lemma}[Properties of $a_{ij},b_i$]
\label{lem:nice-form}
Let $1\leq i, j \leq d$, $z\in \R^d$ and $t>0$.
    Then
    \[
    a_{ij}\bigl(x,f^z_Q(t)\bigr) = \frac 1 2 \partial_{ij}Q^*\bigl(\nabla Q(z-x)\bigr),\quad \forall x \in \R^d \setminus \{z\},
    \]
    and $a_{ij}(z,f^z_Q(t)) = \frac{1}{2}\delta_{ij}$, i.e.,
    \begin{equation}\label{eq:aAbB}
        a_{ij}\bigl(x,f^z_Q(t)\bigr) = \frac{1}{2} A_{ij}(z-x), \quad b_i\bigl(x,f^z_Q(t)\bigr) = -\frac{1}{2} B_i(z-x),
    \end{equation}
    where $A$ and $B$ are as in Lemma \ref{lem:structure}.
    Moreover, $(t,x) \mapsto a_{ij}(x,f^z_Q(t))$ is bounded on $(0,\infty)\times \R^d$.
\end{lemma}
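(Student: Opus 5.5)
The plan is to substitute the explicit gradient of the heat kernel from Lemma~\ref{lem:props-f}(ii) into the definition \eqref{coeffs-A,B}, and then use homogeneity and Legendre duality to remove both the factor $f^z_Q(t,x)/t$ and the $Q^*$.

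\textbf{Step 1: the formula for $a_{ij}$.} By Lemma~\ref{lem:props-f}(ii),
\[
\nabla f^z_Q(t,x)=c(t,x)\,\nabla Q(z-x), \qquad c(t,x):=\tfrac1t f^z_Q(t,x)>0.
\]
The map $\nabla Q$ vanishes only at $0$: by Lemma~\ref{lem:structure}(ii), $y\cdot\nabla Q(y)=2Q(y)>0$ for $y\neq0$. Hence $O_{\nabla f^z_Q(t)}=\{z\}$, and at $x=z$ the convention in \eqref{coeffs-A,B} gives $a_{ij}=\tfrac12\delta_{ij}$ and $b_i=0$.

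For $x\neq z$ we need homogeneity of $Q^*$. The Legendre dual of a positively $2$-homogeneous function is again positively $2$-homogeneous, so $D^2Q^*$ is positively $0$-homogeneous on $\R^d\setminus\{0\}$. Therefore
\[
\partial_{ij}Q^*\bigl(c\nabla Q(z-x)\bigr)=\partial_{ij}Q^*\bigl(\nabla Q(z-x)\bigr),
\]
which is the first claim.

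\textbf{Step 2: identifying $D^2Q^*$ with $A$.} By (H3), $Q$ is strictly convex and $C^2$ away from the origin. Hence $\nabla Q$ is a $C^2$ bijection of $\R^d\setminus\{0\}$ with inverse $\nabla Q^*$, that is, $\nabla Q^*(\nabla Q(y))=y$ for $y\neq0$. Surjectivity follows from coercivity, and injectivity from strict monotonicity. Differentiating this identity in $y$ gives
\[
D^2Q^*\bigl(\nabla Q(y)\bigr)\,D^2Q(y)=I,
\]
so $D^2Q^*(\nabla Q(y))=A(y)$. With $y=z-x$ this is the first identity in \eqref{eq:aAbB}; at $x=z$ it is consistent because $A(0)=I$.

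\textbf{Step 3: the formula for $b_i$.} For $x\neq z$, the function $x\mapsto a_{ij}(x,f^z_Q(t))=\tfrac12A_{ij}(z-x)$ is $C^1$ near $x$. The chain rule gives
\[
\partial_{x_j}\bigl[A_{ij}(z-x)\bigr]=-(\partial_jA_{ij})(z-x)=-B_i(z-x),
\]
hence $b_i(x,f^z_Q(t))=-\tfrac12B_i(z-x)$. At $x=z$ both sides are defined by convention: $b_i=0$, and $B$ is only evaluated away from $0$, or is set to $0$ there. Boundedness of $(t,x)\mapsto a_{ij}(x,f^z_Q(t))$ then follows from Lemma~\ref{lem:structure}(i), since the entries of $A$ are bounded by $1/\lambda$ uniformly in $x$ and independently of $t$.

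\textbf{Main obstacle.} The delicate step is Step 2: justifying the inverse relation $\nabla Q^*=(\nabla Q)^{-1}$ together with the $C^2$ regularity of $Q^*$ away from $0$. This is needed so that $\partial_{ij}Q^*$ is defined at $\nabla Q(z-x)$ and the Hessian identity can be obtained by differentiation. I would prove it via the inverse function theorem: $D^2Q$ is invertible by (H3), so the $C^2$ map $\nabla Q$ has a $C^2$ local inverse. The first-order condition for the supremum defining $Q^*(\xi)$ is $\xi=\nabla Q(x)$, and this supremum is attained uniquely by strict convexity and coercivity. Together these give $\nabla Q^*=(\nabla Q)^{-1}\in C^2(\R^d\setminus\{0\})$. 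The remaining steps are routine bookkeeping.
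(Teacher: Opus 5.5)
Your proposal is correct and follows the same route as the paper: substitute $\nabla f^z_Q(t,x)=\frac1t f^z_Q(t,x)\nabla Q(z-x)$, drop the positive scalar using the $0$-homogeneity of $D^2Q^*$, identify $D^2Q^*(\nabla Q(y))=(D^2Q(y))^{-1}=A(y)$, and get boundedness from Lemma~\ref{lem:structure}(i). You also supply details the paper leaves implicit, namely the justification of $\nabla Q^*=(\nabla Q)^{-1}$ with $C^2$ regularity of $Q^*$ away from the origin, and the chain-rule computation giving $b_i(x,f^z_Q(t))=-\frac12 B_i(z-x)$.
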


\begin{proof}
Since $\partial_{ij} Q^*$ is $0$-homogeneous, for $x \neq z$, we find
\begin{align*}
a_{ij}\bigl(x,f^z_Q(t)\bigr)
&= \frac 1 2 \partial_{ij}Q^*\bigl(\nabla f^z_Q(t,x)\bigr)
= \frac 1 2 \partial_{ij}Q^*\biggl(\frac 1 {t}f^z_Q(t,x) \nabla Q(z-x)\biggr) \\
&= \frac 1 2 \partial_{ij}Q^*\bigl(\nabla Q(z-x)\bigr).
\end{align*}
For $x=z$, $\nabla f^z_Q(t,z) = 0$; thus, by definition, $a_{ij}(z,f^z_Q(t)) = \frac{1}{2}\delta_{ij}$ as claimed.
Then \eqref{eq:aAbB} follows from the 2-homogeneity of $Q$ (which implies $D^2Q^*(\nabla Q(y)) = (D^2Q(y))^{-1}$ for all $y\neq 0$), and Lemma \ref{lem:structure}(i) implies the boundedness of $a_{ij}(x,f^z_Q(t))$.
\end{proof}

We now associate the Finsler heat equation~\eqref{FT} with the \emph{nonlinear Fokker--Planck equation}
\begin{equation}\label{eq:NL-FPE}
    \partial_t u(t,x) = \partial_{ij}\Bigl(a_{ij}\bigl(x,u(t)\bigr) u(t,x) \Bigr)   -\partial_i \Bigl(b_i\bigl(x,u(t)\bigr) u(t,x) \Bigr)
\end{equation}
for $(t,x)\in (0,\infty)\times \R^d$, with $a$ and $b$ as above.

\begin{definition}[Solutions to the nonlinear Fokker--Planck equation]\label{def:F-FPE}
Let $z\in \R^d$.
    We say that $u: (0,\infty)\times \R^d \to [0,\infty)$ is a \emph{probability solution} to \eqref{eq:NL-FPE} with initial datum $z$ if $u(t):=u(t,\cdot) \in \mathfrak{C}$ for all $t >0$ and
    \begin{enumerate}[(i)]
        \item $u(t,x) \,\dd x \in \Pscr$ for all $t>0$, $t\mapsto u(t,x) \,\dd x$ is weakly continuous on $(0,\infty)$, and $u(t,x) \,\dd x \to \delta_z$ weakly as $t \to 0$;
        \item For each $T>0$, the maps $(t,x) \mapsto a_{ij}(x,u(t))$ and $(t,x) \mapsto b_i(x,u(t))$ belong to $L^1([0,T]\times \R^d; u(t,x) \,\dd x \dd t)$;
        \item For all $\varphi \in C^2_c(\R^d)$ and $t>0$,
        \begin{align*}
            &\int_{\R^d} \varphi(x) u(t,x) \,\dd x - \varphi(z) 
            \\&= \int_0^t \int_{\R^d} \Bigl(a_{ij}\bigl(x,u(r)\bigr) \partial_{ij}\varphi(x) + b_i\bigl(x,u(r)\bigr) \partial_i \varphi(x) \Bigr) u(r,x) \,\dd x \dd r.
        \end{align*}
    \end{enumerate}
\end{definition}

This definition is consistent with the definition of solutions to general nonlinear Fokker--Planck equations, for instance as in \cite{RR25}. 

Formally, the Finsler heat equation \eqref{FT} and the nonlinear Fokker--Planck equation \eqref{eq:NL-FPE} are equivalent, as a straightforward integration by parts on the RHS of \eqref{FT} with a test function $\varphi$ shows.
However, rigorously proving this equivalence is delicate.
For our purpose, the following result is sufficient.

\begin{lemma}\label{lem:f-solves-NLFPE}
For any $z\in \R^d$, $f^z_Q$ is a probability solution to \eqref{eq:NL-FPE} with initial datum $z$.
\end{lemma}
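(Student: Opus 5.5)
We check the three conditions of Definition~\ref{def:F-FPE} one at a time for $u=f^z_Q$. Remark~\ref{rem:f-in-C} already gives $f^z_Q(t)\in\mathfrak{C}$, and condition~(i) is exactly Lemma~\ref{lem:props-f}(i).

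For condition~(ii) we use Lemma~\ref{lem:nice-form}. There $a_{ij}(x,f^z_Q(t))=\frac12 A_{ij}(z-x)$ and $b_i(x,f^z_Q(t))=-\frac12 B_i(z-x)$ for $x\neq z$. By Lemma~\ref{lem:structure}(i), $a$ is bounded, so its integrability against the probability measures $f^z_Q(t,x)\,\dd x\,\dd t$ on $[0,T]\times\R^d$ is immediate. For $b$ we use \eqref{eq:b-sing} and the bound $Q(y)\ge \frac{\lambda}{2}|y|^2$, which follows from (H3) and homogeneity. This gives
\[
\int_0^T\!\!\int_{\R^d}\frac{C}{|z-x|}\,f^z_Q(t,x)\,\dd x\,\dd t\le C'\int_0^T\!\!\int_{\R^d}\frac{1}{|y|}\,t^{-d/2}e^{-\lambda|y|^2/(2t)}\,\dd y\,\dd t .
\]
Substituting $y=\sqrt t\,w$ turns the inner integral into $t^{-1/2}\int |w|^{-1}e^{-\lambda|w|^2/2}\,\dd w$. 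This is finite since $d\ge2$, and $t^{-1/2}$ is integrable on $[0,T]$. This is where $d\ge2$ is used.

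For condition~(iii) we start from Lemma~\ref{lem:props-f}(iii), which holds for $\varphi\in C^1_c$ and hence for $\varphi\in C^2_c$. By Lemma~\ref{lem:props-f}(ii) and \eqref{eq:aAbB}, the integrand there equals $-2a_{ij}(x,f^z_Q(r))\,\partial_j f^z_Q(r,x)\,\partial_i\varphi(x)$, so it remains to integrate by parts in $x$ for fixed $r>0$:
\[
-\int a_{ij}\,\partial_j f\,\partial_i\varphi\,\dd x=\int f\,\partial_j\bigl(a_{ij}\partial_i\varphi\bigr)\,\dd x=\int \bigl(a_{ij}\partial_{ij}\varphi+b_i\partial_i\varphi\bigr)f\,\dd x,
\]
using the symmetry of $a$ and $b_i=\partial_j a_{ij}$.

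The main obstacle is justifying this integration by parts, because $a(\cdot,f^z_Q(r))$ is only $C^1$ away from $z$. We excise a ball $B_\varepsilon(z)$ and apply the divergence theorem on $\R^d\setminus B_\varepsilon(z)$, where everything is $C^1$ and $\varphi$ has compact support. The boundary term is bounded by
\[
\|a\|_\infty\,\|f^z_Q(r)\|_\infty\,\|\nabla\varphi\|_\infty\,|\partial B_\varepsilon|=O(\varepsilon^{d-1})\to0 .
\]
The volume integrals converge as $\varepsilon\to0$ by dominated convergence, since $b=O(|x-z|^{-1})$ is locally integrable for $d\ge2$ and $f$ is bounded for fixed $r$. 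Finally, we integrate in $r$ over $(0,t)$. Fubini's theorem is justified by the $L^1$ bound from condition~(ii), together with $|\nabla f^z_Q(r,x)|\le r^{-1}f^z_Q(r,x)\,\Lambda|z-x|$, whose integral scales like $r^{-1/2}$. This yields condition~(iii).
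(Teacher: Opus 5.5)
Your proof is correct and follows essentially the same route as the paper. Condition (i) and membership in $\mathfrak{C}$ come from Lemma~\ref{lem:props-f}(i) and Remark~\ref{rem:f-in-C}. Condition (ii) comes from the boundedness of $a$ together with $|b|\le C|z-x|^{-1}$, integrated against the kernel to give $O(t^{-1/2})$ for $d\ge 2$. Condition (iii) comes from Lemma~\ref{lem:props-f}(iii) plus an integration by parts, which you justify more carefully than the paper by excising $B_\varepsilon(z)$. The only slip is bookkeeping: once the prefactor $-\tfrac12$ is included, the integrand of Lemma~\ref{lem:props-f}(iii) is $-a_{ij}\,\partial_j f^z_Q\,\partial_i\varphi$, not $-2a_{ij}\,\partial_j f^z_Q\,\partial_i\varphi$. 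Your displayed computation in fact uses the correct form.
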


\begin{proof}
Thanks to Lemma \ref{lem:props-f}(i) and Remark \ref{rem:f-in-C}, it remains to prove (ii) and (iii) of Definition \ref{def:F-FPE} for $u=f^z_Q$.
First, note that the Borel measurability of $a_{ij}(x,f^z_Q(t))$ and $b_i(x,f^z_Q(t))$ in $(t,x)$ follows from Lemma \ref{lem:nice-form}. 
Secondly, since $(t,x) \mapsto a_{ij}(x,f^z_Q(t))$ is bounded on $(0,\infty)\times \R^d$ and $\int_0^T \int_{\R^d} f^z_Q(t,x) \,\dd x \dd t = T$ for each $T>0$, $(t,x) \mapsto a_{ij}(x,f^z_Q(t))$ belongs to $L^1([0,T]\times \R^d; f^z_Q(t,x) \,\dd x \dd t)$.
Moreover, Lemma \ref{lem:structure}(iv) yields $|b_i(x,f^z_Q(t))| \leq C|z-x|^{-1}$ for all $x \neq z$ and $t >0$, and hence the map $(t,x) \mapsto b_i(x,f^z_Q(t))$ belongs to $L^1([0,T]\times \R^d; f^z_Q(t,x) \,\dd x \dd t)$ as well, since in dimension $d\geq 2$, we have 
\[
\int_{\R^d} |z-x|^{-1} f^z_Q(t,x) \,\dd x \le C t^{-\frac 1 2},
\]
where $C>0$ depends on $Q$, but not on $t>0$.
Finally, let $\varphi \in C^2_c(\R^d)$ and $t>0$. 
Then, using Lemma \ref{lem:props-f}(iii) and integration by parts, we obtain by the definition of $a_{ij}$ and $b_i$,
 \begin{align*}
            &\int_{\R^d} \varphi(x) f^z_Q(t,x) \,\dd x - \varphi(z) 
            \\&= -\frac 1 2 \int_0^t \int_{\R^d}  \partial_{ij}Q^* \bigl(\nabla f^z_Q(r,x)\bigr) \partial_j f^z_Q(r,x) \partial_i \varphi(x) \,\dd x \dd r
            \\&= \int_0^t \int_{\R^d} \Bigl(a_{ij}\bigl(x,f^z_Q(r)\bigr) \partial_{ij}\varphi(x) + b_i\bigl(x,f^z_Q(r)\bigr) \partial_i \varphi(x) \Bigr) f^z_Q(r,x) \,\dd x \dd r.
        \end{align*}
Thus, $f^z_Q$ satisfies (iii) of Definition \ref{def:F-FPE}, which completes the proof. 
\end{proof}

\subsection{Linearized equation}

Fix $s\geq 0$ and consider the \emph{linear} Fokker--Planck equation
\begin{equation}\label{eq:lin-FPE}
     \partial_t u(t,x) = \partial_{ij}\Bigl(a_{ij}\bigl(x,f^z_Q(s+t)\bigr) u(t,x) \Bigr)   -\partial_i \Bigl(b_i\bigl(x,f^z_Q(s+t)\bigr) u(t,x) \Bigr)
\end{equation}
for $(t,x)\in (0,\infty)\times \R^d$,
obtained by a priori fixing $f^z_Q(s+t)$ in the second arguments of $a_{ij}$ and $b_i$. The time shift by $s\geq 0$ will become relevant later on.
Equation \eqref{eq:lin-FPE} is a linear Fokker--Planck equation with time-dependent coefficients
\begin{equation}\label{eq:lin-coeff}
a^{s,z}_{ij}(t,x) := a_{ij}\bigl(x,f^z_Q(s+t)\bigr), \quad b^{s,z}_i(t,x) := b_i\bigl(x,f^z_Q(s+t)\bigr).
\end{equation}
This kind of linearization has been studied on general Finsler manifolds in \cite{OhtaSturm2014} to develop a nonlinear analog to the $\Gamma$-calculus technique (see also \cite{Ohta2017,Ohta2021,Ohta2022,OhtaSuzuki2025} for further applications).

%see Definition \ref{def:lin-FPE}. For the reader's convenience, we give the definition in this particular case $a_{ij} = a^{s,z}_{ij}$ and $b_i = b^{s,z}_i$ here:

\begin{definition}[Solutions to the linear Fokker--Planck equation]
\label{def:lin-FPE}
    Let $z\in \R^d$ and $s\geq 0$. 
    We say that $u: (0,\infty)\times \R^d \to [0,\infty)$ is a \emph{probability solution} to \eqref{eq:lin-FPE} with initial datum $\zeta \in \Pscr$ if
    \begin{enumerate}[(i)]
        \item $u(t,x) \,\dd x \in \Pscr$ for all $t>0$, $t\mapsto u(t,x) \,\dd x$ is weakly continuous on $(0,\infty)$, and $u(t,x) \,\dd x \to \zeta$ weakly as $t \to 0$;
        \item The maps $(t,x) \mapsto a_{ij}(x,f^z_Q(s+t))$ and $(t,x) \mapsto b_i(x,f^z_Q(s+t))$ belong to $L^1([0,T]\times \R^d; u(t,x) \,\dd x \dd t)$ for each $T>0$;
        \item For all $\varphi \in C^2_c(\R^d)$ and $t>0$,
        \begin{align*}
            &\int_{\R^d}\varphi(x) u(t,x) \,\dd x - \int_{\R^d}\varphi(x) \,\dd\zeta(x)
            \\&= \int_0^t \int_{\R^d} \Bigl(a_{ij}\bigl(x,f^z_Q(s+r)\bigr) \partial_{ij}\varphi(x) + b_i\bigl(x,f^z_Q(s+r)\bigr) \partial_i \varphi(x) \Bigr) u(r,x) \,\dd x \dd r.
        \end{align*}
    \end{enumerate}
\end{definition}

%We stress that a solution in this definition is defined on $(0,\infty)$ and its initial datum is attained at $t=0$, since the time shift by $s\geq 0$ only affects

%The time shift by $s\geq 0$ can change the curve $t\mapsto f^z_Q(s+t)$ which we a priori fix in the arguments of $a$ and $b$, but it does not change the time horizon $[0,\infty)$ on which \eqref{eq:lin-FPE} is solved.

% \red{M: I want to clarify that here by "time shift" we do NOT mean that we solve the equation from a strictly positive time on. The initial time from which we solve the equation is still $0$, i.e., solutions are defined on $[0,\infty)$, not on $[s,\infty)$. The shift affects (only) the time-variable in the frozen coefficient. I improved your grammar point.}

We make the following immediate, yet important observation.

\begin{lemma}\label{lem:f-solves-linfpe}
    Let $z\in \R^d$ and $s\geq 0$. 
    Then $u(t,x) = f^z_Q(s+t,x)$ is a probability solution to \eqref{eq:lin-FPE} with initial datum $\zeta = f^z_Q(s,x) \,\dd x$ (here, with slight abuse of notation, we write $f^z_Q(0,x) \,\dd x = \delta_z$ if $s=0$).
\end{lemma}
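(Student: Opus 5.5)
The plan is to reduce everything to Lemma~\ref{lem:f-solves-NLFPE} by a time shift; the key observation is that $a^{s,z}_{ij}(t,x)=a_{ij}(x,f^z_Q(s+t))$ and $b^{s,z}_i(t,x)=b_i(x,f^z_Q(s+t))$ are exactly the nonlinear coefficients evaluated along the curve $u(t)=f^z_Q(s+t)$. We then verify (i)--(iii) of Definition~\ref{def:lin-FPE} for $u(t,x)=f^z_Q(s+t,x)$ one at a time. If $s=0$, the statement is literally Lemma~\ref{lem:f-solves-NLFPE}, since then (i)--(iii) of Definition~\ref{def:lin-FPE} coincide with (i)--(iii) of Definition~\ref{def:F-FPE} with $\zeta=\delta_z$. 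So the remaining work is the case $s>0$.

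For (i), Lemma~\ref{lem:props-f}(i) shows that each $f^z_Q(s+t,x)\,\dd x$ is a probability measure and that $t\mapsto f^z_Q(s+t,x)\,\dd x$ is weakly continuous on $(0,\infty)$. Since $s>0$, weak continuity of $r\mapsto f^z_Q(r,x)\,\dd x$ at $r=s$ gives convergence to $\zeta=f^z_Q(s,x)\,\dd x$ as $t\to0$.

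For (ii), Lemma~\ref{lem:nice-form} bounds $a^{s,z}_{ij}$ uniformly, which gives integrability against $f^z_Q(s+t,x)\,\dd x\,\dd t$ on $[0,T]$. For the drift, Lemma~\ref{lem:structure}(iv) and \eqref{eq:aAbB} give $|b^{s,z}|\le C|z-x|^{-1}$. Using the estimate in the proof of Lemma~\ref{lem:f-solves-NLFPE}, the weighted integral is bounded by $\int_0^T C(s+t)^{-1/2}\,\dd t<\infty$.

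For (iii), we apply Definition~\ref{def:F-FPE}(iii), already established for $f^z_Q$ in Lemma~\ref{lem:f-solves-NLFPE}, at times $s+t$ and $s$, and subtract the two identities. The terms $\varphi(z)$ cancel, leaving
\[
\int\varphi f^z_Q(s+t)\,\dd x-\int\varphi f^z_Q(s)\,\dd x=\int_s^{s+t}\!\!\int\Bigl(a_{ij}\bigl(x,f^z_Q(r)\bigr)\partial_{ij}\varphi+b_i\bigl(x,f^z_Q(r)\bigr)\partial_i\varphi\Bigr)f^z_Q(r,x)\,\dd x\,\dd r .
\]
The substitution $r\mapsto s+r$ turns the right-hand side into the required form. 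Subtracting is legitimate because both integrals are finite by the integrability in (ii).

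No step is a serious obstacle. The only care needed is in (ii): one must use the $t^{-1/2}$ bound on $\int|z-x|^{-1}f^z_Q(t,x)\,\dd x$, which relies on $d\ge2$, to confirm that the $1/|x|$ singularity of the drift is integrable uniformly after the time shift.
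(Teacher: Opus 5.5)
Your proposal is correct and takes the same route as the paper, whose entire proof is the remark that the statement follows directly from Lemma~\ref{lem:f-solves-NLFPE}; your time-shift verification of (i)--(iii) of Definition~\ref{def:lin-FPE} simply spells out that remark. One cosmetic point: when you subtract, the finiteness of the unshifted integrals over $[0,s+t]$ and $[0,s]$ comes from Definition~\ref{def:F-FPE}(ii) for $f^z_Q$ itself, i.e.\ the $r^{-1/2}$ bound near $r=0$, rather than from your shifted $(s+t)^{-1/2}$ bound.
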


\begin{proof}
    This follows directly from Lemma \ref{lem:f-solves-NLFPE}.
\end{proof}

In general, \eqref{eq:lin-FPE} may have many solutions which are not related to $f^z_Q$ at all. The previous lemma states that, in any case, one particular solution is given by $u(t,x) = f^z_Q(s+t,x)$.

\section{The associated McKean--Vlasov equation}\label{sect:McKean-V-SDE}
In this section, we study the following McKean--Vlasov SDE:
\begin{equation}\label{eq:DDSDE}
\left\{
\begin{aligned}
    \dd X_t &= b\bigl(X_t,u(t)\bigr) \,\dd t + \sqrt{2a \bigl(X_t,u(t)\bigr)} \,\dd W_t
    \\ &=\frac 1 2 \divv \Bigl(D^2Q^*\bigl(\nabla u(t,X_t)\bigr)\Bigr) \,\dd t + \sqrt{D^2Q^*\bigl(\nabla u(t,X_t)\bigr)} \,\dd W_t,
    \\ \mathcal{L}(X_t) &= u(t,x) \,\dd x,\ \forall t>0,
\end{aligned}
\right.
\end{equation}
where $a,b$ are as in \eqref{coeffs-A,B}, $W = (W_t)_{t \geq 0}$ is the standard $d$-dimensional Brownian motion, and $\mathcal{L}(X_t)$ denotes the distribution of $X_t$.

This SDE is the natural corresponding probabilistic model for the Finsler heat equation, more precisely for its associated nonlinear Fokker--Planck formulation \eqref{eq:NL-FPE}. Indeed, it is readily seen from Itô's formula that the curve of one-dimensional time marginals $\mu_t = \mathcal{L}(X_t)$ of any solution $X$ to \eqref{eq:DDSDE} solves \eqref{eq:NL-FPE}. The goal of this section is the substantially more delicate reverse implication, i.e., that for a given solution $u$ to \eqref{eq:NL-FPE} consisting of probability densities $u(t)$, there exists a solution to \eqref{eq:DDSDE} with $\mathcal{L}(X_t) = u(t,x) \,\dd x$ for all $t\geq 0$. More precisely, we will construct such an SDE-solution with one-marginal densities $u(t) = f^z_Q(t)$.

%\red{M: change these first sentences} As seen in the first paragraph of Introduciton, for a given linear Fokker-Planck (FP) equation we have the corresponding stochastic differential equations whose time-marginal densities are given by the solutions to the liner FP equation.  A natural question would be whether it is the case also for our {\it nonlinear} FP equation. 

%We will see in Proposition~\ref{prop:SP} (\emph{superposition principle}) with the aid of Lemma \ref{lem:nice-form} that Equation~\eqref{eq:DDSDE} is indeed the right SDE in the sense that the time-marginal densities are given by the solution to the nonlinear Fokker-Planck equation \eqref{eq:NL-FPE}.
% The reason why $\eqref{eq:DDSDE}$ is \emph{naturally associated} to \eqref{eq:NL-FPE} is that both equations are related via the \emph{superposition principle}, see~ below.
%We are going to construct a probabilistic counterpart for the Finsler heat kernel $f^z_Q$ by constructing unique solutions to \eqref{eq:DDSDE} with $u(t)$ replaced by $f^z_Q(t)$.

\begin{definition}[Solutions to the McKean--Vlasov SDE]
\label{def:F-DDSDE} \ 
\begin{enumerate}[(i)]
    \item A (\emph{probabilistically weak}) \emph{solution} to \eqref{eq:DDSDE} is a quadruple consisting of a stochastic basis $(\Omega, \Fscr, (\Fscr_t)_{t\geq 0}, \mathbb{P})$, an $(\Fscr_t)$-standard Brownian motion $W = (W_t)_{t\geq 0}$, a family of probability densities $u(t) \in \mathfrak{C}$, $t>0$,  and an $(\Fscr_t)$-adapted stochastic process $X = (X_t)_{t\geq 0}$ on $\Omega$ such that $\mathcal{L}(X_t)  = u(t,x) \,\dd x$ for all $t>0$,
    \begin{equation*}
        \mathbb{E}\bigg[ \int_0^T \Bigl( \bigl|b\bigl(X_t,\mathcal{L}(X_t)\bigr)\bigr| + \bigl|a\bigl(X_t,\mathcal{L}(X_t)\bigr) \bigr| \Bigr) \,\dd t \bigg] < \infty, \quad \forall T>0
    \end{equation*}
    (where $\mathbb{E}$ denotes the expectation with respect to $\mathbb{P}$),
    and, $\mathbb{P}$-a.s.,
    \begin{equation*}
        X_t -X_0 = \int_0^t b\bigl( X_r,\mathcal{L}(X_r) \bigr) \,\dd r + \int_0^t \sqrt{2a\bigl(X_r,\mathcal{L}(X_r)\bigr)} \,\dd W_r,\quad \forall t\geq 0.
    \end{equation*}
Below, we usually simply refer to $X$ instead of the entire quadruple as a solution.
\item For each solution $X$, we call its path law $P = \mathcal{L}(X)$ (a probability measure on $C([0,\infty);\R^d)$) a \emph{solution path law} to \eqref{eq:DDSDE}.
\item We say that solutions to \eqref{eq:DDSDE} with initial datum $\zeta \in \Pscr$ are \emph{weakly unique} if $\mathcal{L}(X) = \mathcal{L}(Y)$ for any two solutions $X,Y$ with $\mathcal{L}(X_0) = \mathcal{L}(Y_0) = \zeta$. %Equivalently, solution path laws with initial condition $\zeta$ are unique.
\end{enumerate}
\end{definition}

If we prescribe $u$ in \eqref{eq:DDSDE} as $u(t,x) \,\dd x = f^z_Q(t,x) \,\dd x$, then by Lemma \ref{lem:nice-form}, \eqref{eq:DDSDE} turns into
\begin{equation}\label{eq:f-DDSDE}
\left\{
\begin{aligned}
    \dd X_t &= \frac 1 2 \divv \Bigl(D^2 Q^*\bigl(\nabla Q(z-X_t) \bigr)\Bigr) \,\dd t + \sqrt{D^2Q^*\bigl(\nabla Q(z-X_t) \bigr)} \,\dd W_t,
    \\ \mathcal{L}(X_t) &= f^z_Q(t,x) \,\dd x,\ \forall t>0.
\end{aligned}
\right.
\end{equation}

The notion of solution to \eqref{eq:f-DDSDE} is that of probabilistically weak solution $X$ to the SDE with coefficients $b(x) = \frac 1 2 \divv (D^2Q^*(\nabla Q(z-x)))$ and $\sigma(x) = \sqrt{D^2Q^*(\nabla Q(z-x))}$ in the usual sense, satisfying in addition the marginal condition $\mathcal{L}(X_t) = f^z_Q(t,x) \,\dd x$.

As our first main result (Theorem \ref{thm:construction}), we construct weak solutions to \eqref{eq:DDSDE} with marginal densities $u(t,x) = f^z_Q(t,x)$.
The proof relies on the following celebrated Ambrosio--Figalli--Trevisan \emph{superposition principle} (see \cite[Theorem 2.5]{Trevisan16}).

\begin{proposition}[Superposition principle]
\label{prop:SP}
    Let $a^0 = (a^0_{ij})_{1 \leq i,j \leq d}$ and $b^0 = (b^0_1,\ldots,b^0_d)$ consist of Borel measurable coefficients 
    \[
    a^0_{ij}, b^0_i: (0,\infty)\times \R^d \to \R
    \]
    such that $a^0$ is pointwise symmetric and non-negative definite.
    Suppose that $u: (0,\infty)\times \R^d \to [0,\infty)$ is a probability solution to the linear Fokker--Planck equation
    \begin{equation}\label{eq:linfpe}
        \partial_t u(t,x) = \partial_{ij}\bigl(a^0_{ij}(t,x) u(t,x)\bigr) - \partial_i \bigl(b^0_i(t,x) u(t,x)\bigr)
    \end{equation}
with a probability measure-valued initial datum $\zeta$, satisfying
\begin{equation}\label{intint}
    a^0_{ij}, b^0_i \in L^1\bigl( [0,T]\times \R^d;u(t,x) \,\dd x \dd t \bigr),\quad \forall T>0.
\end{equation}
%in the sense of Definition \ref{def:lin-FPE}%
Then there exists a probabilistically weak solution $X = (X_t)_{t\geq 0}$ to the SDE
\begin{equation*}
    \dd X_t = b^0(t,X_t) \,\dd t + \sigma^0(t,X_t) \,\dd W_t,
\end{equation*}
%in the sense of Definition \ref{def:lin-SDE}
with $\sigma^0:(0,\infty)\times \R^d \to \R^{d\times d}$ such that $\sigma^0 (\sigma^0)^T = 2 a^0$, $\mathcal{L}(X_t) = u(t,x)\,\dd x$ for all $t>0$, and $\mathcal{L}(X_0) = \zeta$.
\end{proposition}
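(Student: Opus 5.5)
The plan is the Figalli--Trevisan scheme: regularize, solve the regularized martingale problem, pass to the limit, and convert the limiting martingale problem into a weak SDE solution. First I would reduce to the martingale problem. It suffices to find a probability measure $P$ on $C([0,\infty);\R^d)$ with the following three properties: the time marginals of $P$ are $P\circ e_t^{-1}=u(t,x)\,\dd x$ for $t>0$; $P\circ e_0^{-1}=\zeta$; and for every $\varphi\in C^2_c(\R^d)$ the process
\[
M^\varphi_t:=\varphi(\gamma_t)-\varphi(\gamma_0)-\int_0^t \bigl(a^0_{ij}(r,\gamma_r)\partial_{ij}\varphi(\gamma_r)+b^0_i(r,\gamma_r)\partial_i\varphi(\gamma_r)\bigr)\,\dd r
\]
is a $P$-martingale with respect to the canonical filtration. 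The integrand is well defined $P$-a.s. thanks to \eqref{intint} and Fubini, since $\E_P\int_0^T|b^0(r,\gamma_r)|\,\dd r=\int_0^T\!\int|b^0|u\,\dd x\,\dd r<\infty$. Given such a $P$, I would apply the standard representation theorem for continuous martingales with quadratic variation $\int_0^t 2a^0(r,\gamma_r)\,\dd r$, on a possibly enlarged probability space. This produces a Brownian motion $W$ and the measurable square root $\sigma^0$ of $2a^0$ for which the canonical process solves the SDE.

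Second, I would construct $P$ by regularization. Let $u^\varepsilon(t):=u(t)*\rho_\varepsilon$ for a strictly positive smooth kernel $\rho_\varepsilon$ with all derivatives bounded relative to $\rho_\varepsilon$; a Gaussian works. Define
\[
a^\varepsilon:=\frac{(a^0u)*\rho_\varepsilon}{u^\varepsilon},\qquad b^\varepsilon:=\frac{(b^0u)*\rho_\varepsilon}{u^\varepsilon}.
\]
Then $u^\varepsilon$ solves \eqref{eq:linfpe} with coefficients $(a^\varepsilon,b^\varepsilon)$ and initial datum $\zeta*\rho_\varepsilon$. By Jensen's inequality the key integrability bound is preserved uniformly in $\varepsilon$:
\[
\int_0^T\!\!\int\bigl(|a^\varepsilon|+|b^\varepsilon|\bigr)u^\varepsilon\le\int_0^T\!\!\int\bigl(|a^0|+|b^0|\bigr)u .
\]
The regularized coefficients are smooth and locally Lipschitz in $x$, so the SDE with these coefficients has a solution up to explosion. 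Uniqueness of the linear Fokker--Planck equation in the class of bounded-coefficient solutions, obtained via the adjoint Kolmogorov equation, would then show that its marginals are exactly $u^\varepsilon(t)$. This in turn rules out explosion, because the marginals are probability measures. If the coefficients are not bounded, I would first truncate $a^0,b^0$ in a way compatible with the same procedure, or localize with stopping times. This yields path laws $P^\varepsilon$ whose marginals are $u^\varepsilon(t)\,\dd x$.

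Third, I would prove tightness of $\{P^\varepsilon\}$ and pass to the limit. Tightness of the initial laws and of the marginals follows from weak continuity of $u$ together with $u^\varepsilon\to u$. For equicontinuity of paths, I would use the de la Vallée-Poussin theorem to pick a superlinear convex $\theta$ with $\int_0^T\!\int\theta(|a^0|+|b^0|)u<\infty$; Jensen again keeps this bound uniform in $\varepsilon$. Combined with a BDG estimate on $\varphi(\gamma_t)$ for a countable family of test functions, this gives a uniform modulus of continuity in probability, and hence tightness by the Aldous/Kolmogorov criterion. Let $P$ be a limit point. Its marginals are $u(t)\,\dd x$ by construction, and its initial law is $\zeta$.

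The main obstacle is showing that $M^\varphi$ is a $P$-martingale, because $a^0,b^0$ are merely Borel and $L^1(u)$, so the functional $\gamma\mapsto\int_0^t b^0(r,\gamma_r)\,\dd r$ is not continuous. I would handle this as follows. Approximate $(a^0,b^0)$ by continuous bounded $(\tilde a,\tilde b)$ in $L^1(u\,\dd x\,\dd t)$, using Lusin's theorem. Then estimate three differences: the $P^\varepsilon$-expectation of the error $\int|b^\varepsilon-\tilde b|$, which is controlled by $\int|b^0-\tilde b|u+\int|\tilde b*\rho_\varepsilon\text{-type error}|$ through Jensen and the fixed marginals $u^\varepsilon$; the continuous-coefficient martingale identity, which passes to the limit $\varepsilon\to0$; and finally the error $\int|\tilde b-b^0|u$ under $P$. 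Since all path laws share the prescribed marginals, each error reduces to an $L^1(u)$ norm, and letting the approximation tighten closes the argument.
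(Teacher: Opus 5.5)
Your overall architecture is the Figalli--Trevisan scheme: convolution with $a^\varepsilon=(a^0u)*\rho_\varepsilon/u^\varepsilon$, Jensen and de la Vall\'ee-Poussin for uniform bounds, tightness, Lusin approximation to identify the martingale problem, then martingale representation. That is what the paper relies on; it gives no proof and quotes \cite[Theorem 2.5]{Trevisan16}. Your tightness and limit-identification steps are sound. The gap is in the construction of the approximating laws $P^\varepsilon$ with marginals exactly $u^\varepsilon(t)\,\dd x$. It sits precisely where the statement goes beyond Figalli's bounded-coefficient case, which is also the case the paper needs, since there $b^0\sim|x-z|^{-1}$. If $a^0,b^0$ are only in $L^1(u\,\dd x\,\dd t)$, Jensen controls $a^\varepsilon,b^\varepsilon$ only in $L^1(u^\varepsilon\,\dd x\,\dd t)$. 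Locally in $x$ their size is only integrable in $t$, and globally they may be unbounded with superlinear growth in $x$. The duality argument via the backward Kolmogorov equation needs a backward solution with globally bounded first and second derivatives, so that it can be inserted as a non-compactly supported test function in the equation for $u^\varepsilon$. That requires bounded coefficients with bounded derivatives, which you do not have here.

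Your proposed remedies do not close this gap. Truncating $a^0,b^0$ destroys the fact that $u$, hence $u^\varepsilon$, solves the equation with the truncated coefficients, so there is nothing left to superpose. Localizing by stopping times produces the regularized SDE up to explosion, but says nothing about whether its sub-probability marginals coincide with $u^\varepsilon(t)$. The inference ``the marginals are probability measures, hence no explosion'' is circular, because it presupposes that identification. Uniqueness of probability solutions of Fokker--Planck equations with smooth but unbounded coefficients is not automatic. In the integrable class you need, it is essentially as deep as the superposition principle itself.

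So an extra ingredient is required. A deterministic time change $t\mapsto\int_0^t\bigl(1+\int(|a^0|+|b^0|)(r,x)\,u(r,x)\,\dd x\bigr)\,\dd r$ removes the time-integrability issue. The spatial growth needs a genuine additional argument. Examples would be a reduction that makes $a^\varepsilon,b^\varepsilon$ bounded, or a comparison showing that the marginals of the SDE killed at explosion are dominated by $u^\varepsilon(t)$, followed by a cutoff argument that uses the integrability. A minor point as well: a Gaussian does not satisfy $|\nabla\rho_\varepsilon|\le C_\varepsilon\rho_\varepsilon$, since the ratio grows like $|x|/\varepsilon^2$. A kernel such as $c_\varepsilon e^{-\sqrt{1+|x|^2}/\varepsilon}$ does.
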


\begin{remark} 
    We stress that the previous proposition only applies to solutions $u$ to the linear Fokker--Planck equation \eqref{eq:linfpe} which satisfy the global in space integrability condition 
   \eqref{intint}
    (or a slightly more general, but still global condition; see \cite{BRS19-SPpr}). 
    To define solutions to \eqref{eq:linfpe} most generally, it suffices to require
    \[
    a^0_{ij}, b^0_i \in L^1_{\textup{loc}}\bigl([0,\infty)\times \R^d;u(t,x) \,\dd  x \dd t \bigr),
    \]
    but for such merely locally integrable solutions, the previous result does not apply in general. 
    Our definition of probability solutions to the linear Fokker--Planck equation \eqref{eq:lin-FPE} (see Definition \ref{def:lin-FPE}) includes this global condition, and we explicitly verify it for $u = f^z_Q$ in the proof of Lemma \ref{lem:f-solves-NLFPE}. 
    Hence, in the proof of Theorem \ref{thm:construction} below, we may indeed apply this superposition principle to $f^z_Q$.
\end{remark}

The following theorem is our first main result.

\begin{theorem}[Existence]
\label{thm:construction}
    Let $d\geq 2$, $z\in \R^d$. 
    Then there exists a probabilistically weak solution $X^z = (X^z_t)_{t\geq 0}$ to the McKean--Vlasov equation \eqref{eq:DDSDE} with $\mathcal{L}(X^z_t) = f^z_Q(t,x) \,\dd x$ for all $t>0$ and $X^z_0 = z$.
\end{theorem}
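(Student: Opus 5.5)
The plan is to apply the superposition principle (Proposition~\ref{prop:SP}) to the linearized equation \eqref{eq:lin-FPE} with $s=0$. We take the frozen coefficients
\[
a^0_{ij}(t,x):=a^{0,z}_{ij}(t,x)=a_{ij}\bigl(x,f^z_Q(t)\bigr),\qquad b^0_i(t,x):=b^{0,z}_i(t,x)=b_i\bigl(x,f^z_Q(t)\bigr),
\]
as in \eqref{eq:lin-coeff}. By Lemma~\ref{lem:nice-form} these equal $\frac12 A_{ij}(z-x)$ and $-\frac12 B_i(z-x)$. They are therefore Borel measurable, indeed independent of $t$. Moreover $a^0$ is pointwise symmetric and, by Lemma~\ref{lem:structure}(i), positive definite with $a^0\ge \frac{1}{2\Lambda}I$. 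This verifies the structural hypotheses of Proposition~\ref{prop:SP}.

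Next we take $u(t,x):=f^z_Q(t,x)$. By Lemma~\ref{lem:f-solves-linfpe} with $s=0$, it is a probability solution to \eqref{eq:lin-FPE} with initial datum $\zeta=\delta_z$ in the sense of Definition~\ref{def:lin-FPE}. Its item (ii) is exactly the global integrability \eqref{intint}, which was checked in the proof of Lemma~\ref{lem:f-solves-NLFPE}. The key ingredients there were the boundedness of $a^0$ and the bound $\int |z-x|^{-1}f^z_Q(t,x)\,\dd x\le Ct^{-1/2}$. This bound is integrable on $[0,T]$, and it is the one place where $d\ge2$ is used. Proposition~\ref{prop:SP} then gives a weak solution $X$ to
\[
\dd X_t=b^0(t,X_t)\,\dd t+\sigma^0(t,X_t)\,\dd W_t
\]
with $\sigma^0=\sqrt{2a^0}$, $\mathcal L(X_t)=f^z_Q(t,x)\,\dd x$ for $t>0$, and $\mathcal L(X_0)=\delta_z$. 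Hence $X_0=z$ almost surely.

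It remains to identify this solution as a solution of the McKean--Vlasov equation \eqref{eq:DDSDE} in the sense of Definition~\ref{def:F-DDSDE}. We set $u(t)=f^z_Q(t)$, which lies in $\mathfrak C$ by Remark~\ref{rem:f-in-C}. Since $\mathcal L(X_t)=f^z_Q(t,x)\,\dd x$, we have $b(X_t,\mathcal L(X_t))=b^0(t,X_t)$ and $a(X_t,\mathcal L(X_t))=a^0(t,X_t)$ for every $t>0$. The time $t=0$ is a Lebesgue null set and does not affect the integrals. The expectation condition then becomes
\[
\E\int_0^T\bigl(|b^0(t,X_t)|+|a^0(t,X_t)|\bigr)\,\dd t=\int_0^T\!\!\int_{\R^d}\bigl(|b^0|+|a^0|\bigr)f^z_Q(t,x)\,\dd x\,\dd t<\infty,
\]
and the integral equation of Definition~\ref{def:F-DDSDE}(i) coincides with the SDE obtained above.

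The only potentially delicate point is the integrability of the singular drift $|B(z-x)|\sim|z-x|^{-1}$ near $t=0$, where $f^z_Q(t)$ concentrates at the singularity. This has already been handled by the $t^{-1/2}$ bound. That bound follows from the scaling $x=z+\sqrt t\,y$ together with $Q(y)\ge\frac{\lambda}{2}|y|^2$, which in turn follows from (H3) and homogeneity. Consequently the proof reduces to bookkeeping: checking that the hypotheses of the superposition principle and of Definition~\ref{def:F-DDSDE} match.
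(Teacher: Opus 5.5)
Your proposal is correct and follows the paper's proof. Like the paper, you apply the superposition principle (Proposition~\ref{prop:SP}) to the linearized equation \eqref{eq:lin-FPE} with $s=0$, using Lemma~\ref{lem:f-solves-linfpe} and the integrability \eqref{intint} from the proof of Lemma~\ref{lem:f-solves-NLFPE}, and then identify the resulting SDE solution as a solution of \eqref{eq:DDSDE}. You only spell out bookkeeping the paper leaves implicit (measurability, $X_0=z$ a.s., the expectation condition); one nit is that $a^0\ge\frac{1}{2\Lambda}I$ need not hold at the single point $x=z$, where $a^0=\frac12 I$, but the superposition principle only needs non-negative definiteness.
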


\begin{proof}
    By Lemma \ref{lem:f-solves-linfpe} with $s=0$, $f^z_Q$ is a probability solution to \eqref{eq:lin-FPE} with initial datum $\delta_z$. By Proposition \ref{prop:SP} applied to the linear Fokker--Planck equation with coefficients $a_{ij}^{0,z},b_i^{0,z}$ from \eqref{eq:lin-coeff}, there exists a weak solution $X^z = (X^z_t)_{t\geq 0}$ to \eqref{eq:f-DDSDE}. The integrability condition~\eqref{intint} has been already shown  in the proof of~Lemma~\ref{lem:f-solves-NLFPE}. Hence $X^z$ also solves \eqref{eq:DDSDE} with $\mathcal{L}(X^z_t) = f^z_Q(t,x) \,\dd x$, as claimed.
\end{proof}

\section{Construction of Brownian motion in Minkowski normed spaces}\label{sect:NLMP-stuff}

We turn to our main result (Theorem \ref{thm:main-result}), stating that the solutions $X^z$ to the McKean--Vlasov equation \eqref{eq:DDSDE} constructed in Theorem \ref{thm:construction} are weakly unique conditioned on their one-dimensional time marginal densities being $f^z_Q(t)$, and that these solutions form a nonlinear Markov process. 

\subsection{Nonlinear Markov processes and cores}

We recall some basic notions and key results from \cite{ABR26,RR25}. 
We refer to these papers for more details on nonlinear Markov processes and cores.

Below we use the following canonical model-notation:
$\Cscr := C([0,\infty);\R^d)$ is the space of continuous paths from $[0,\infty)$ to $\R^d$ endowed with the topology of locally uniform convergence,
$\pi_t: \Cscr \to \R^d$ is the evaluation map $\pi_t(w) := w(t)$,
$\Pi_t : \Cscr \to \Cscr$ is the shift $\Pi_t(w) := w(t+\cdot)$,
and $\Fscr_s := \sigma(\pi_r, 0\leq r \leq s)$ denotes the corresponding canonical filtration.
Then the Borel $\sigma$-algebra on $\Cscr$ is given by $\Bscr(\Cscr) = \sigma(\pi_r, r \geq 0)$ and we write $\Pscr(\Cscr)$ for the set of Borel probability measures on $\Cscr$.
For $P \in \Pscr(\Cscr)$, we denote by $P_t \in \Pscr$ the image measure (push-forward) of $P$ under $\pi_t$, i.e., $P_t = P\circ \pi_t^{-1}$.

\begin{definition}[Nonlinear Markov processes; see Definition 2.1 in \cite{RR25}]
\label{def:NL-MP}
   Let $\Pscr_0 \subseteq \Pscr$. 
   A (\emph{time-homogeneous}) \emph{nonlinear Markov process} is a family $\{P^\zeta\}_{\zeta \in \Pscr_0} \subseteq \Pscr(\Cscr)$ such that, setting $\mu^\zeta_t := P^\zeta_t \in \Pscr$ for $(t,\zeta) \in [0,\infty) \times \Pscr_0$,
    \begin{enumerate}[(i)]
        \item $\mu^\zeta_0 = \zeta$ for all $\zeta \in \Pscr_0$, and $\mu^\zeta_t \in \Pscr_0$ for all $(t,\zeta) \in (0,\infty)\times \Pscr_0$;
        \item the (\emph{time-homogeneous}) \emph{nonlinear Markov property} holds, i.e.,
        \begin{equation*}
        P^\zeta(\pi_{t+s} \in A | \Fscr_s) ( \cdot) = P^{\mu^\zeta_s,\pi_s(\cdot)}(\pi_t \in A)\quad P^\zeta\text{-a.s.},\quad \forall s,t \geq 0, \, \zeta \in \Pscr_0
    \end{equation*}
    for every Borel set $A \subseteq \R^d$,
    where for $\zeta \in \Pscr_0$, $\{P^{\zeta,y}\}_{y\in \R^d}$ is the disintegration family of $P^{\zeta}$ with respect to $\pi_0$, i.e., the $\zeta$-a.s.\ unique family $\{Q^y\}_{y\in \R^d}$ in $\Pscr(\Cscr)$ such that $y\mapsto Q^y(W)$ is measurable and $P^\zeta(W) = \int_{\R^d} Q^y(W) \,\dd \zeta(y)$ for all $W \in \Bscr(\Cscr)$.
    \end{enumerate}
\end{definition}

By definition, the set of initial data $\Pscr_0$ for a nonlinear Markov process is invariant under the flow of its one-dimensional time-marginals.
This excludes, except for deterministic cases, the choice $\Pscr_0 = \{\delta_z \,|\, z \in \R^d\}$ which, on the other hand, appears a natural choice if one wants to construct a nonlinear Markov process with one-dimensional time marginals given by the family of fundamental solutions of a nonlinear PDE.
This led to the following notion of \emph{nonlinear Markov core} from \cite{ABR26}.
The idea is to construct the \emph{minimal} nonlinear Markov process containing such one-dimensional time marginals. 

\begin{definition}[Nonlinear Markov cores]
\label{def:NL-MC}
 Let $\{\mu^z_t\}_{t\geq 0, z \in \R^d} \subseteq \Pscr$ be such that $\mu^z_0 = \delta_z$ for all $z \in \R^d$ and $(t,z) \mapsto \mu^z_t$ is injective from $[0,\infty)\times \R^d$ to $ \Pscr$. We call a family $\{P^z\}_{z\in \R^d} \subseteq \Pscr(\Cscr)$ a (\emph{time-homogeneous}) \emph{nonlinear Markov core} for $\{\mu^z_t\}_{t\geq 0, z \in \R^d}$ if
        \begin{enumerate}[(i)]
        \item $P^z_t = \mu^z_t$ for all $(t,z)\in  [0,\infty) \times \R^d$;
        \item the family $\{P^\zeta\}_{\zeta \in \mathcal{P}_0}\subseteq \Pscr(\Cscr)$, defined by $\Pscr_0 \coloneqq \{\mu^z_t\}_{t\geq 0, z \in \R^d}$ and 
        \begin{equation*}
            P^{\zeta} \coloneqq P^z \circ \Pi_t^{-1} \quad \text{for } \zeta = \mu^z_t,
        \end{equation*}
     is a nonlinear Markov process in the sense of Definition \ref{def:NL-MP}. 
    \end{enumerate}
    We call $\{P^\zeta\}_{\zeta \in \mathcal{P}_0}$ from (ii) the nonlinear Markov process \emph{induced by} $\{P^z\}_{z\in \R^d}$.
    If for each $(t,z) \in (0,\infty)\times \R^d$, $\mu^z_t$ is given by a density as $\mu^z_t = u^z(t,x) \,\dd x$, then we also call $\{P^z\}_{z\in \R^d}$ a nonlinear Markov core for $\{u^z(t)\}_{t>0, z\in \R^d}$.
\end{definition}

The previous definition implies that the nonlinear Markov process $\{P^\zeta\}_{\zeta \in \mathcal{P}_0}$ induced by a nonlinear Markov core $\{P^z\}_{z\in \R^d}$ is uniquely determined by the latter. 
Hence, even though a nonlinear Markov core does not satisfy Definition \ref{def:NL-MP}, it is equivalent to its induced nonlinear Markov process.

For the following results from \cite{ABR26} which we are going to use below, consider the following general nonlinear and linear Fokker--Planck equations and McKean--Vlasov SDE:
 \begin{equation}\label{gen-nl-fpe}
    \partial_t u(t,x) = \partial_{ij}\Bigl(a^0_{ij}\bigl(x,u(t)\bigr) u(t,x) \Bigr) - \partial_i \Bigl(b^0_i\bigl(x,u(t)\bigr) u(t,x)\Bigr),
    \end{equation}
       \begin{equation}\label{gen:lin-fpe}
       \partial_t u(t,x) = \partial_{ij}\Bigl(a^0_{ij}\bigl(x,v(t)\bigr) u(t,x) \Bigr) - \partial_i \Bigl(b^0_i \bigl(x,v(t)\bigr) u(t,x)\Bigr),
    \end{equation}
     \begin{equation}\label{gen-DDSDE}
   \left\{
   \begin{aligned}
   \dd X_t &= b^0\bigl(X_t,u(t)\bigr) \,\dd t + \sqrt{2a^0\bigl(X_t,u(t)\bigr)} \,\dd W_t, \\
   \mathcal{L}(X_t) &= u(t,x)\,\dd x, \ t>0.
    \end{aligned}
    \right.
    \end{equation}
Here $a^0 = (a^0_{ij})_{1\leq i,j \leq d}$ takes values in the space of symmetric non-negative definite matrices, and $b^0 = (b^0_1,\dots,b^0_d)$. 
In \eqref{gen:lin-fpe}, $t \mapsto v(t)$ is a curve of probability densities which is fixed a priori in the second arguments of $a^0$ and $b^0$.
%Their notion of \emph{probability solution} and \emph{probabilistically weak) solution}, respectively, is recalled in the appendix for the sake of the reader. 
Note that \eqref{eq:NL-FPE}, \eqref{eq:lin-FPE} and \eqref{eq:DDSDE} are special cases of \eqref{gen-nl-fpe}, \eqref{gen:lin-fpe} and \eqref{gen-DDSDE}, respectively. 
For the standard definition of solutions to the above equations, see for instance \cite[Definition 3.1]{RR25}.

The following general result from \cite[Theorem 4.6]{ABR26} (see also \cite[Theorem 3.8]{RR25}) %\blue{[SO: Are these theorem numbers correct? Since this an essential ingredient, we may say a few words about its availability.]} \red{M: yes, all correct. At this point I think no need to say more, since we just re-state the result from [1] as Prop.4.3; then we verify its applicability later on} 
is one key ingredient for the proof of our main uniqueness result in Theorem~\ref{thm:main-result} below. 
For $s\geq 0$, we write $u^z_{s+}$ for the map $u^z_{s+}(t,x) := u^z(s+t,x)$, $(t,x) \in (0,\infty)\times \R^d$. 
Note that since \eqref{gen:lin-fpe} is a linear equation, its set of solutions with a prescribed initial datum is convex.

\begin{proposition}\label{prop:NL-MC-construction}
    For each $z \in \R^d$, let $u^z$ be a probability solution to the nonlinear Fokker--Planck equation \eqref{gen-nl-fpe} with initial datum $z$, such that
    \begin{enumerate}[(i)]
        \item $(t,z) \mapsto u^z(t,x) \,\dd x$ is injective from $[0,\infty)\times \R^d$ to  $\Pscr$ (here we write $u^z(0,x) \,\dd x := \delta_z$);

        \item for each $z \in \R^d$ and $s \in [0,\infty)\setminus \{0\}$, $u^z_{s+}$ is an extreme point in the convex set of probability solutions to \eqref{gen:lin-fpe} with $u^z(s+t)$ replacing $v(t)$ and with initial datum $u^z(s,x) \,\dd x$.
    \end{enumerate}
    Then, for each $z \in \R^d$, there exists a unique weak solution $X^z$ to the McKean--Vlasov SDE \eqref{gen-DDSDE} with the prescribed one-dimensional time marginals $\mathcal{L}(X^z_t) = u^z(t,x) \,\dd x$, $t > 0$, and $X^z_0 = z$. 
    The family of solution path laws $\{P^z\}_{z\in \R^d}$, $P^z = \mathcal{L}(X^z)$, is a nonlinear Markov core for $\{u^z(t)\}_{t>0, z\in \R^d}$ and is uniquely determined by $\{u^z\}_{z \in \R^d}$ and \eqref{gen-DDSDE}.
\end{proposition}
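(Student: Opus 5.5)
This result is quoted from \cite[Theorem 4.6]{ABR26}; here is how I would prove it directly. The plan has three parts: construct a solution, prove uniqueness of its one-dimensional marginals' path law, and assemble the nonlinear Markov core.

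\textbf{Existence.} Fix $z$. Since $u^z$ solves \eqref{gen-nl-fpe}, freezing $v=u^z$ shows that $u^z$ solves the linear equation \eqref{gen:lin-fpe} with initial datum $\delta_z$. Under the global integrability built into the notion of probability solution, the superposition principle (Proposition \ref{prop:SP}) gives a weak solution $X^z$ of the SDE with coefficients $a^0(\cdot,u^z(t))$ and $b^0(\cdot,u^z(t))$. Its marginals are $u^z(t,x)\,\dd x$, so $X^z$ solves the McKean--Vlasov SDE \eqref{gen-DDSDE}.

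\textbf{Uniqueness of path laws.} This is the core step. Let $P$ be any solution path law with marginals $u^z(t)$, and fix $s>0$. I would show that $P\circ\Pi_s^{-1}$ is determined, by proving that its finite-dimensional distributions are determined. The method is a conditioning argument.

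For $s<t_1<\dots$, take a bounded Borel $g\ge0$ that is $\sigma(\pi_r, r\le s)$-measurable and satisfies $E_P[g]=1$. Under the measure $g\,\dd P$, the shifted process is still a martingale-problem solution of the linear SDE with coefficients frozen at $u^z(s+\cdot)$. Its marginals $\nu^g_t$ are dominated by $\|g\|_\infty u^z(s+t)$, which keeps the integrability needed for them to be probability solutions. Their initial datum is $g$-weighted. To get the same initial datum $u^z(s)$, I would condition on $\pi_s$ and use $g=h(\pi_s)$ with $E_P[h(\pi_s)]=1$. Then the marginals $\nu^{h}$ and $\nu^{2-h}$ (for $0\le h\le 2$) have initial data whose average is $u^z(s)$, and their average is $u^z_{s+}$.

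This mixing does not directly contradict extremality, because the initial data differ. The trick, as in \cite{RR25}, is to instead take $g$ measurable with respect to $\mathcal F_s$ but \emph{orthogonal} to $\pi_s$, i.e. $E_P[g\mid\pi_s]=1$. Then both $g\,\dd P$ and $(2-g)\,\dd P$ have initial marginal $u^z(s)$. Their time marginals give two probability solutions of \eqref{gen:lin-fpe} with the same initial datum, averaging to $u^z_{s+}$. Extremality forces both to equal $u^z_{s+}$.

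This says that $\pi_{s+t}$ is conditionally independent of $\mathcal F_s$ given $\pi_s$, and that its conditional law is determined by $\pi_s$ through the linear equation. To obtain the identification of the transition, I would repeat the argument with $g$ depending on $\pi_s$ only. This yields the Markov property relative to the frozen flow and the identification of the conditional laws $P(\pi_{s+t}\in\cdot\mid\pi_s=y)$ as those of the (extremal) linear problem started at $u^z(s)$ with disintegration.

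Iterating over times $t_1<\dots<t_n$ determines the finite-dimensional distributions of $P$ on $[s,\infty)$ for every $s>0$. Letting $s\downarrow0$ and using path continuity with $X_0=z$ determines $P$.

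\textbf{Nonlinear Markov core.} Injectivity (i) makes $P^\zeta:=P^z\circ\Pi_t^{-1}$ for $\zeta=u^z(t)$ well-defined on $\Pscr_0$. The conditional-independence identity from the previous step is exactly the nonlinear Markov property of Definition \ref{def:NL-MP}(ii). Here one has to identify $P^{\mu_s,y}$ with the disintegration of the shifted law, which uses the uniqueness just proven applied to $\Pi_s$-shifted solutions. Uniqueness of $\{P^z\}$ then follows from the weak uniqueness.

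The main obstacle is the conditioning/extremality step. The difficulty is ensuring that the perturbed measures $g\,\dd P$ yield genuine probability solutions (integrability via $g\le 2$) with identical initial datum. The difficulty is also ensuring that extremality of marginals translates into determination of the whole conditional law, not just its one-time marginals.
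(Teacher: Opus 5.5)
\textbf{Gap: identifying the transition kernels.} Your existence step and your ``orthogonal'' perturbation are sound. Take $g$ that is $\Fscr_s$-measurable with $0\le g\le 2$ and $E_P[g\mid\pi_s]=1$. Then the shifted marginals of $g\,P$ and $(2-g)\,P$ are probability solutions of \eqref{gen:lin-fpe} starting from $u^z(s,x)\,\dd x$. They are dominated by $2u^z_{s+}$ and average to $u^z_{s+}$, so extremality gives $E_P[\varphi(\pi_{s+t})\mid\Fscr_s]=E_P[\varphi(\pi_{s+t})\mid\pi_s]$.

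However, this Markov property together with the prescribed one-dimensional marginals does not determine $P$, since distinct Markov laws can share all one-dimensional marginals. You also need the kernels $p^P_{s,s+t}(y,\cdot):=P(\pi_{s+t}\in\cdot\mid\pi_s=y)$ to be the same for every solution path law $P$. Your proposed route to this is to repeat the argument with $g=h(\pi_s)$. That fails for exactly the reason you gave a paragraph earlier: $h(\pi_s)P$ and $(2-h(\pi_s))P$ start from $h\,u^z(s)$ and $(2-h)\,u^z(s)$, so extremality in the set of solutions started at $u^z(s,x)\,\dd x$ says nothing about them. No argument that looks at a single $P$ can identify its kernels. The appeal to ``the linear problem started at $u^z(s)$ with disintegration'' is also circular. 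The curve $u^z_{s+}$ carries no transition kernels, and the path law of the linear SDE from $u^z(s)$ is precisely what you are trying to show is unique.

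\textbf{The missing ingredient.} You need that extremality implies uniqueness in the dominated class for \emph{every} dominated initial datum, and you must apply it to two competing solutions.
\begin{enumerate}[(1)]
\item Let $P,P'$ be solution path laws with marginals $u^z$, and let $h\ge 0$ be bounded Borel. Let $\nu^h_t$ and $\nu'^h_t$ be the time-$(s+t)$ marginals of $h(\pi_s)P$ and $h(\pi_s)P'$.
\item Both solve \eqref{gen:lin-fpe} with the same initial datum $h\,u^z(s,x)\,\dd x$, and both are bounded by $\|h\|_\infty u^z_{s+}$.
\item Hence, for $0<\varepsilon<1/\|h\|_\infty$, the curves $u^z_{s+}\pm\varepsilon(\nu^h-\nu'^h)$ are probability solutions from $u^z(s,x)\,\dd x$ whose average is $u^z_{s+}$.
\item Extremality forces $\nu^h=\nu'^h$, i.e.\ $\int h(y)\bigl(p^P_{s,s+t}(y,A)-p^{P'}_{s,s+t}(y,A)\bigr)u^z(s,y)\,\dd y=0$ for all such $h$. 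So the kernels coincide $u^z(s,y)\,\dd y$-a.e.
\end{enumerate}
The same perturbation, applied to $g\,P$ versus $E_P[g\mid\pi_s]\,P$ for general bounded $\Fscr_s$-measurable $g\ge 0$, also yields the Markov property. So a single lemma (extremality $\Rightarrow$ uniqueness among dominated solutions with dominated initial data) drives the whole uniqueness proof. This is the same perturbation mechanism as in Lemma~\ref{lem:equiv-extrem-uniqu}; the paper itself does not prove the proposition but quotes it from \cite[Theorem 4.6]{ABR26} and \cite[Theorem 3.8]{RR25}.

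With this step supplied, your iteration over $t_1<\dots<t_n$ and the verification of the core property go through. For $\zeta=u^z(r,x)\,\dd x$, injectivity makes $P^{\mu^\zeta_s}=P^z\circ\Pi_{r+s}^{-1}$ by definition, so identifying its disintegration needs no further uniqueness.
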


For later use, we also recall a crucial lemma from \cite[Lemma 3.5]{RR25}, which characterizes the extremality condition in Proposition \ref{prop:NL-MC-construction}(ii) by a restricted uniqueness condition for the associated linearized Fokker--Planck equation.

\begin{lemma}\label{lem:equiv-extrem-uniqu}
  Let $(s,z) \in (0,\infty) \times \R^d$. 
  In the setting of the previous proposition, $u^z_{s+}$ is an extreme point in the set of probability solutions to \eqref{gen:lin-fpe} with initial datum $u^z(s,x) \,\dd x$ if and only if $u^z_{s+}$ is a unique probability solution to \eqref{gen:lin-fpe} with initial datum $u^z(s,x) \,\dd x$ in the set 
    \begin{equation}\label{eq:set1}
        \bigl\{[t\mapsto v(t,x) \,\dd x] \in C_w([0,\infty);\Pscr)\,|\, \exists C \geq 1, \, v(t,x) \leq C u^z_{s+}(t,x) \,\dd x \dd t\text{-a.e.} \bigr\},
    \end{equation}
    where $C_w([0,\infty);\Pscr)$ denotes the set of weakly continuous paths $t\mapsto \mu_t \in \Pscr$ on $[0,\infty)$.   
\end{lemma}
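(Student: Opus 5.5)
The plan is to argue directly from the definitions. Fix $(s,z)$ and write $\zeta := u^z(s,x)\,\dd x$, $u := u^z_{s+}$. Let $K$ be the convex set of probability solutions to \eqref{gen:lin-fpe} with $v(t)=u^z(s+t)$ frozen in the coefficients and initial datum $\zeta$. Two facts will be used throughout, and both follow from linearity of \eqref{gen:lin-fpe} in the unknown. First, if $w_1,w_2$ solve it with initial data $\zeta_1,\zeta_2$, then any linear combination $\alpha w_1+\beta w_2$ satisfies the weak formulation (iii) with initial datum $\alpha\zeta_1+\beta\zeta_2$. Second, the integrability condition (ii) is inherited by any nonnegative density dominated by a constant multiple of $u$, because $u$ itself satisfies (ii).

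\emph{Uniqueness in \eqref{eq:set1} $\Rightarrow$ extremality.} Suppose $u=\lambda v_1+(1-\lambda)v_2$ with $v_1,v_2\in K$ and $\lambda\in(0,1)$. Since $v_2\ge 0$, we get $v_1\le \lambda^{-1}u$ $\dd x\,\dd t$-a.e. Hence $v_1$ is a probability solution in the class \eqref{eq:set1} with $C=\lambda^{-1}\ge 1$. Weak continuity on $[0,\infty)$ holds because $v_1\in K$ and its initial datum is $\zeta$. By the assumed uniqueness, $v_1=u$, and then $v_2=(u-\lambda u)/(1-\lambda)=u$. So $u$ is extreme.

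\emph{Extremality $\Rightarrow$ uniqueness in \eqref{eq:set1}.} Let $v$ be a probability solution with initial datum $\zeta$ such that $v\le Cu$ a.e. for some $C\ge 1$. If $C=1$, then $v\le u$ and both have total mass $1$ for a.e.\ $t$, so $v=u$ a.e.; by weak continuity this gives equality for all $t$. If $C>1$, set
\[
w:=\frac{Cu-v}{C-1}.
\]
By the assumed domination, $w\ge 0$, and it has mass $1$ for each $t$. It is weakly continuous with $w(t)\,\dd x\to (C\zeta-\zeta)/(C-1)=\zeta$. It satisfies (ii) because $w\le \frac{C}{C-1}u$, and it satisfies (iii) by linearity. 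Hence $w\in K$. Since
\[
u=\tfrac1C\,v+\bigl(1-\tfrac1C\bigr)w
\]
is a convex combination with weights in $(0,1)$, extremality forces $v=w=u$.

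The only step needing care is bookkeeping rather than estimates. We must check that every element of \eqref{eq:set1} which solves the equation really belongs to $K$, i.e.\ satisfies the global integrability (ii); this is exactly what the domination $v\le Cu$ provides. We must also check that pointwise a.e.\ inequalities between densities upgrade to identities of the curves $t\mapsto v(t,x)\,\dd x$ for every $t$, which follows from weak continuity. I expect no genuine obstacle beyond these verifications.
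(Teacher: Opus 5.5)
Your argument is correct, and it is the standard convex-decomposition proof: $v_1\le\lambda^{-1}u$ in one direction, and $u=\frac1C v+\bigl(1-\frac1C\bigr)\frac{Cu-v}{C-1}$ in the other. The paper does not reprove this lemma but quotes it from \cite[Lemma 3.5]{RR25}, where it rests on essentially this argument. The only point to make explicit is that $Cu-v\ge 0$ holds only $\dd x\,\dd t$-a.e., so $w$ should be taken as a nonnegative version (e.g.\ its positive part), with nonnegativity of the measures $w(t,x)\,\dd x$ for every $t$ following from weak continuity, as you already indicate.
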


\subsection{Brownian motion in Minkowski normed spaces and nonlinear Markov property}

The following theorem is our main result of this paper. 

\begin{theorem}[Uniqueness]
\label{thm:main-result}
    Let $d\geq 2$.
    \begin{enumerate}[(i)]
        \item  For each $z\in \R^d$, the solution $X^z$ from Theorem \ref{thm:construction} is the unique weak solution to \eqref{eq:DDSDE} with the prescribed one-dimensional time marginals $\mathcal{L}(X^z_t) = f^z_Q(t,x)\,\dd x$, $t>0$, and $X^z_0 = z$;
        \item The family of path laws $\{P^z\}_{z\in \R^d} \subseteq \mathcal{P}(\mathcal{C})$, $P^z := \mathcal{L}(X^z)$, is a nonlinear Markov core for $\{f^z_Q(t)\}_{t>0, z\in \R^d}$.
    \end{enumerate}
\end{theorem}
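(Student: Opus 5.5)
The plan is to deduce both (i) and (ii) from Proposition~\ref{prop:NL-MC-construction}, applied with $a^0=a$, $b^0=b$ from \eqref{coeffs-A,B} and $u^z=f^z_Q$. By Lemma~\ref{lem:f-solves-NLFPE}, $f^z_Q$ is a probability solution to \eqref{eq:NL-FPE} with initial datum $z$. It then remains to verify hypotheses (i) and (ii) of that proposition; the existence part is already Theorem~\ref{thm:construction}, and uniqueness and the core property are part of the proposition's conclusion.

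\textbf{Injectivity.} Suppose $f^z_Q(t)\,\dd x=f^{z'}_Q(t')\,\dd x$ with $t,t'>0$. The maximum of $f^z_Q(t,\cdot)$ is attained exactly at $x=z$, since $Q$ vanishes only at $0$; hence $z=z'$. The maximal value $((2t)^{d/2}C_Q)^{-1}$ then forces $t=t'$. A measure with a density is never equal to a Dirac mass, and $\delta_z=\delta_{z'}$ forces $z=z'$. So $(t,z)\mapsto f^z_Q(t,x)\,\dd x$ is injective on $[0,\infty)\times\R^d$.

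\textbf{Extremality.} By Lemma~\ref{lem:equiv-extrem-uniqu}, for fixed $s>0$ it suffices to show that $f^z_Q(s+\cdot)$ is the unique probability solution to \eqref{eq:lin-FPE} with initial datum $f^z_Q(s,x)\,\dd x$ in the class \eqref{eq:set1}. I will prove more, namely uniqueness among all probability solutions, via pathwise uniqueness for the SDE \eqref{I:SDE-linear}, written with $A,B$ as in Lemma~\ref{lem:nice-form}. The steps are as follows.
\begin{enumerate}[(a)]
\item Let $v$ be any probability solution in \eqref{eq:set1}. Since $v\le C f^z_Q(s+\cdot)$, the integrability \eqref{intint} holds by the estimate in the proof of Lemma~\ref{lem:f-solves-NLFPE}. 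Proposition~\ref{prop:SP} then gives a weak SDE solution with marginals $v(t)$, and likewise one with marginals $f^z_Q(s+t)$.
\item Prove pathwise uniqueness for $\dd X_t=-\tfrac12 B(z-X_t)\,\dd t+\sqrt{A(z-X_t)}\,\dd W_t$ with absolutely continuous initial law. The Yamada--Watanabe theorem then yields uniqueness in law, hence $v=f^z_Q(s+\cdot)$.
\end{enumerate}

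For (b), on each annulus $\{\varepsilon\le|x-z|\le 1/\varepsilon\}$ the coefficients are $C^1$. Indeed, $A\in C^1$ away from $0$, and $\sqrt{A}$ is smooth as a function of the uniformly elliptic matrix $A$. So the coefficients are Lipschitz there, and a Gronwall argument shows that two solutions driven by the same $W$ with the same initial value coincide up to the exit time $\tau_\varepsilon$. Since $X_0\ne z$ a.s. (the initial law has a density), it remains to show that $\tau_\varepsilon\to\infty$ a.s. as $\varepsilon\to0$. Nonexplosion is easy because the coefficients are bounded near infinity by Lemma~\ref{lem:structure}. The \textbf{main obstacle} is non-attainment of $z$.

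To handle it, I would use a radial Lyapunov function adapted to the geometry, $R_t:=Q(z-X_t)$, or possibly $Q^*$ of $\nabla Q$, whichever makes the generator computation cleanest. Writing $y=z-X_t$, It\^o's formula gives drift
\[
\tfrac12\nabla Q(y)\cdot B(y)+\tfrac12\operatorname{tr}\bigl(A(y)D^2Q(y)\bigr)=\tfrac12\nabla Q(y)\cdot B(y)+\tfrac d2 .
\]
The quadratic variation has density $\nabla Q\cdot A\nabla Q=y\cdot\nabla Q(y)=2Q(y)$, by Lemma~\ref{lem:structure}(ii),(iii). For the first term, use $\nabla Q\cdot B=\partial_j(A_{ij}\partial_iQ)-A_{ij}\partial_{ij}Q=\operatorname{div}(y)-d=0$. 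Hence $\dd R=d\,\dd t+2\sqrt{R}\,\dd\beta$ for a Brownian motion $\beta$, up to the factor conventions, so $R$ is a squared Bessel process of dimension $d\ge2$. Such a process does not hit $0$ from a positive starting point. Equivalently, apply It\^o's formula to $\log R$ (for $d=2$) or $R^{1-d/2}$, which are local martingales, with localization between the levels $\varepsilon$ and $1/\varepsilon$, to get $P(\tau_\varepsilon<T)\to0$. This yields global pathwise uniqueness, the extremality hypothesis, and thus both (i) and (ii).
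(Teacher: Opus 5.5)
Your proposal is correct and is essentially the paper's proof. You apply Proposition~\ref{prop:NL-MC-construction} via Lemma~\ref{lem:equiv-extrem-uniqu}, reduce uniqueness for the linearized equation to pathwise uniqueness through superposition and Yamada--Watanabe, localize on annuli, and rule out hitting $z$ because $2Q(z-X_t)$ is a squared Bessel process of dimension $d\ge 2$. Your direct It\^o computation with $\nabla Q\cdot B=0$ is equivalent to the paper's $LQ(z-\cdot)=d$. Note that it actually yields $\dd R=\tfrac d2\,\dd t+\sqrt{2R}\,\dd\beta$, not $d\,\dd t+2\sqrt{R}\,\dd\beta$; this normalization matters because it fixes the dimension, and your scale functions $R^{1-d/2}$ and $\log R$ are the right ones for it.

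One step needs tightening, and the paper makes the same slip. Under (H2) you only have $A\in C^1(\R^d\setminus\{0\})$, so the drift $-\tfrac12 B(z-\cdot)$ with $B=\operatorname{div}A$ is merely continuous on annuli. Your $C^1$ argument covers $\sqrt{A}$ but not the drift, so the Gr\"onwall step does not apply as written. To fix it, either assume $Q\in C^4(\R^d\setminus\{0\})$, or replace the Gr\"onwall step by Veretennikov's pathwise uniqueness theorem for bounded measurable drift with Lipschitz, uniformly nondegenerate diffusion, applied to coefficients modified outside the annulus.
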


Via this result, we construct a probabilistic counterpart of the Finsler heat kernel as the nonlinear Markov process consisting of the family of unique solutions to the McKean--Vlasov SDE \eqref{eq:DDSDE} with Finsler heat kernel one-dimensional time marginal densities.

\begin{remark}[Comparison with Euclidean case]
In the special case $Q(x) = Q_2(x) := |x|^2/2$, where $|\cdot|$ denotes the standard Euclidean norm on $\R^d$, this construction yields standard Brownian motion. 
Indeed, in this case $f^z_{Q_2}$ is the standard heat kernel
\[
f^z_{Q_2}(t,x) = \frac{1}{(2\pi t)^{d/2}} \exp\biggl( - \frac{|z-x|^2}{2t}\biggr),
\]
and the Finsler Laplacian becomes the standard Laplacian, i.e., \eqref{eq:FHE} becomes the standard heat equation.
Likewise, \eqref{eq:DDSDE} reduces to $\dd X_t = \dd W_t$.
Clearly, by imposing the initial condition $X_0 = z$, the unique solutions are simply $X^z_t := W_t +z$ for a standard Brownian motion $W$. 
Hence, in this case the nonlinear Markov core constructed in Theorem \ref{thm:main-result} is actually a classical Markov process, consisting of the shifted Wiener measures $\mathbb{W}^z = \mathcal{L}(W+z)$. 
\end{remark}

Having this analogy to the standard Brownian motion in the Euclidean case, we propose the following definition.
\begin{definition}[Brownian motion in Minkowski normed spaces]
\label{def:FBm}
 Let $d\geq 2$. For $z\in \R^d$, we call the solution $X^z$ to \eqref{eq:DDSDE} from Theorem \ref{thm:main-result}(i) (defined on any stochastic basis supporting a standard Brownian motion) \emph{Brownian motion starting at $z$ in the Minkowski normed space $(\R^d,\|\cdot\|)$}.
\end{definition}

\section{Proof of Theorem \ref{thm:main-result}}\label{sect:proof}

All assertions of Theorem \ref{thm:main-result} will follow by applying Proposition \ref{prop:NL-MC-construction} to $u^z = f^z_Q$, \eqref{eq:NL-FPE} and \eqref{eq:DDSDE}. 
Thus, below we verify its assumptions for this case. 
Injectivity of $(t,z) \mapsto f^z_Q(t,x) \,\dd x$ from $[0,\infty)\times \R^d$ to $\Pscr$ is obvious (again, with slight abuse of notation, we write $f^z_Q(0,x) \,\dd x = \delta_z)$. 
Hence, taking Lemma \ref{lem:equiv-extrem-uniqu} into account, the following lemma verifies all the assumptions of Proposition \ref{prop:NL-MC-construction}, which in turn completes the proof of Theorem \ref{thm:main-result}. 

\begin{lemma}\label{lem:crucial-lemma}
    Let $d\geq 2, s> 0$ and $z\in \R^d$. 
     Then the linear equation \eqref{eq:lin-FPE} has a probability solution with initial datum $\zeta = f^z_Q(s,x) \,\dd x$ in the sense of Definition \ref{def:lin-FPE}, which is the unique solution in
    \begin{align}
     &\mathcal{S}_{s,z}:= \notag\\
     &\,\bigl\{ [t\mapsto u(t,x)\,\dd x] \in C_w([0,\infty);\Pscr)\,|\, \exists C\geq 1,\, u(t,x) \leq Cf^z_Q(s+t,x)\, \dd x\dd t\text{-a.e.} \bigr\}.
    \end{align}
\end{lemma}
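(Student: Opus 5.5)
Existence is immediate from Lemma \ref{lem:f-solves-linfpe}: $u(t,x)=f^z_Q(s+t,x)$ is a probability solution with initial datum $f^z_Q(s,x)\,\dd x$, and it trivially lies in $\mathcal{S}_{s,z}$ with $C=1$. For uniqueness, as announced in the introduction, I would prove the stronger statement that \emph{pathwise} uniqueness holds for the SDE \eqref{I:SDE-linear} with coefficients $\tilde b(x)=-\frac12 B(z-x)$ and $\tilde\sigma(x)=\sqrt{A(z-x)}$ (Lemma \ref{lem:nice-form}), for every initial law that is absolutely continuous. Granting this, take any $u\in\mathcal{S}_{s,z}$ solving \eqref{eq:lin-FPE}. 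Since $u\le Cf^z_Q(s+\cdot)$ and $f^z_Q$ satisfies the global integrability \eqref{intint} (shown in the proof of Lemma \ref{lem:f-solves-NLFPE}, with $\int_0^T (s+t)^{-1/2}\,\dd t<\infty$), Proposition \ref{prop:SP} applies to $u$. It produces a weak solution $Y$ with $\mathcal{L}(Y_t)=u(t,x)\,\dd x$ and $\mathcal{L}(Y_0)=f^z_Q(s,x)\,\dd x$. Likewise $f^z_Q(s+\cdot)$ yields a weak solution $X$ with the same initial law. By Yamada--Watanabe, pathwise uniqueness gives uniqueness in law, so $u(t)=f^z_Q(s+t)$ for all $t$.

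To prove pathwise uniqueness, let $X,Y$ be two solutions on a common basis with the same $W$ and $X_0=Y_0$ having a density. \textbf{Localization step.} For $\varepsilon>0$, set $\tau_\varepsilon:=\inf\{t: |X_t-z|\le\varepsilon \text{ or } |Y_t-z|\le \varepsilon \text{ or } |X_t|\vee|Y_t|\ge 1/\varepsilon\}$. On the annulus, $A$ is $C^1$ by (H2), so $\sqrt{A(z-\cdot)}$ is Lipschitz there (uniform ellipticity in Lemma \ref{lem:structure}(i) makes the matrix square root smooth), and $B$ is $C^1$ hence Lipschitz. We can modify the coefficients outside the annulus to get globally Lipschitz ones. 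The standard It\^o/Gr\"onwall estimate on $\E|X_{t\wedge\tau_\varepsilon}-Y_{t\wedge\tau_\varepsilon}|^2$ then gives $X=Y$ on $[0,\tau_\varepsilon]$. The outer exit times tend to $\infty$ by non-explosion (bounded diffusion, and a drift of linear growth away from $z$).

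\textbf{Non-attainability step, the main obstacle.} It remains to show that any solution $X$ started from an absolutely continuous law a.s.\ never hits $z$; then $\tau_\varepsilon\to\infty$ and we conclude. By translation take $z=0$ and write $X=-Z$. My first choice of radial function is $R_t:=Q(X_t)$, or its reflected version $Q(-X_t)$ matching the sign convention. Applying It\^o's formula away from $0$ and using $A\nabla Q=x$ from \eqref{eq:AgradQ}, the martingale part has quadratic variation $\nabla Q\cdot A\nabla Q\,\dd t=2Q\,\dd t$. The It\^o correction is $\frac12\tr(A\,D^2Q)=\frac d2$, an exact cancellation. The drift contribution is $-\frac12 B\cdot\nabla Q$, and I would compute it via $B_i=\partial_jA_{ij}$ and homogeneity. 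Indeed, $\partial_j(A_{ij}\partial_iQ)=\partial_j x_j=d$ gives $B\cdot\nabla Q=d-\tr(A D^2Q)=0$. So $R$ is a squared Bessel-type process of dimension $d\ge2$: $\dd R=\frac d2\,\dd t+\sqrt{2R}\,\dd\beta$ after a time rescaling. The scale function $\log R$ (if $d=2$) or $R^{1-d/2}$ (if $d>2$) is then a local martingale before hitting $0$, and the usual argument shows that $0$ is not hit from $R_0>0$. Here $R_0>0$ holds a.s.\ because $X_0$ has a density, which is exactly where $s>0$ is used. The delicate points are getting the sign of $B$ right and justifying It\^o's formula only up to $\tau_\varepsilon$. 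With those handled, $\Pbb(\lim_\varepsilon\tau_\varepsilon<\infty)=0$ follows by letting $\varepsilon\to0$ in the hitting estimate.
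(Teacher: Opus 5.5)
Your proposal is correct and follows the paper's proof essentially step by step: existence from Lemma~\ref{lem:f-solves-linfpe}, uniqueness by combining the superposition principle with pathwise uniqueness for \eqref{SDE-linear} (hence uniqueness in law), and pathwise uniqueness via a Gr\"onwall argument on annuli plus non-attainability of $z$, shown by exhibiting $2Q(z-X_t)$ as a squared Bessel process of dimension $d\ge 2$ with scale functions $y^{1-d/2}$ and $\log y$. Two small remarks: you should commit to the radial function $Q(z-X_t)$, since \eqref{eq:AgradQ} pairs $A(z-x)$ with $\nabla Q(z-x)$ and $Q$ may be asymmetric (the sign of $B$ is then irrelevant because, as you show, $B\cdot\nabla Q=0$); and your claim that $B$ is $C^1$ on annuli does not follow from (H2), and the same applies to the paper's ``smooth, hence Lipschitz'', because $B$ involves third derivatives of $Q$ and is only continuous there, so the Gr\"onwall step needs either $Q\in C^4(\R^d\setminus\{0\})$ or a Veretennikov-type local uniqueness result for bounded drift and Lipschitz nondegenerate diffusion.
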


By Lemma \ref{lem:f-solves-linfpe}, this solution is identified as~$u(t,x) = f^z_Q(s+t,x)$. 

\begin{remark}
    As the proof below shows (see in particular Proposition \ref{prop:pathwise-unique}), we in fact prove a strictly stronger statement, namely uniqueness in the class of \emph{all} probability solutions with initial datum $\zeta = f^z_Q(s,x) \,\dd x$.
\end{remark}

Before proving the previous lemma, let us point out that while its assertion concerns uniqueness for a \emph{linear} PDE, a topic with a long and well-developed history,  its proof is not standard in the present setting because 
\begin{itemize}
    \item  we claim uniqueness in the potentially large class of \emph{distributional} solutions -- and not (as would be much simpler), for instance, in the class of \emph{weak} solutions which have weak first order spatial derivatives (cf.\ \cite{Ohta2021,OhtaSturm2009}). 
    In fact, while $f^z_Q$ is regular, a priori this need not be true for elements in~$\mathcal{S}_{s,z}$.
    \item our coefficients have critical/supercritical singularities as seen in~Remark~\ref{r:BS}.
\end{itemize}

\subsection{Proof of Lemma \ref{lem:crucial-lemma}}

Lemma \ref{lem:crucial-lemma} will follow as a consequence of the following strictly stronger statement. 

\begin{proposition}[Pathwise uniqueness]
\label{prop:pathwise-unique}
Let $d\geq 2$, $z\in \R^d$ and $s>0$. 
Probabilistically weak solutions to 
\begin{equation}\label{SDE-linear}
    \dd X_t = \frac 1 2 \divv \Bigl(D^2 Q^*\bigl(\nabla Q(z-X_t)\bigr)\Bigr) \,\dd t + \sqrt{D^2Q^*\bigl(\nabla Q(z-X_t)\bigr)} \,\dd W_t
\end{equation}
$($that is, to the first equation in \eqref{eq:f-DDSDE}$)$ with initial datum $\mathcal{L}(X_0) = f^z_Q(s,x)\,\dd x$ are \emph{pathwise unique}, i.e., $X_t = \widetilde{X}_t$ for all $t\geq 0$ a.s.\ for any two probabilistically weak solutions $X$ and $\widetilde{X}$ to \eqref{SDE-linear} with initial datum $f^z_Q(s,x) \,\dd x$ on the same stochastic basis with the same standard Brownian motion $W$ such that $X_0 = \widetilde{X}_0$ a.s.
\end{proposition}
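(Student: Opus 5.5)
We argue in two steps, following the outline in the introduction: uniqueness away from the singular point $z$, and then unattainability of $z$. Set $Y_t := z - X_t$. Then $Y$ solves
\[
\dd Y_t = \tfrac12 B(Y_t)\,\dd t - \sigma(Y_t)\,\dd W_t, \qquad \sigma := \sqrt{A},
\]
by Lemma \ref{lem:nice-form}, with $A,B$ as in Lemma \ref{lem:structure}. Both $A$ and $B$ are positively homogeneous, of degree $0$ and $-1$ respectively, and $C^1$ on $\R^d\setminus\{0\}$. Since $A$ is uniformly elliptic and $C^1$ away from $0$, the square root $\sigma$ is locally Lipschitz on $\R^d\setminus\{0\}$; $B$ is locally Lipschitz there as well, since $A\in C^2(\R^d\setminus\{0\})$ by $Q\in C^3$.

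\textbf{Localization.} For $0<\varepsilon<R$, let $\tau_{\varepsilon,R}$ be the first exit time of $Y$ from the annulus $\{\varepsilon<|y|<R\}$, and let $\widetilde\tau_{\varepsilon,R}$ be the same for $\widetilde Y := z-\widetilde X$. Extend $\sigma$ and $B$ from the closed annulus to globally Lipschitz coefficients. A standard Itô/Gronwall estimate for $\E|Y_{t\wedge\tau\wedge\widetilde\tau}-\widetilde Y_{t\wedge\tau\wedge\widetilde\tau}|^2$ then gives $Y=\widetilde Y$ on $[0,\tau_{\varepsilon,R}\wedge\widetilde\tau_{\varepsilon,R}]$ a.s. Consequently the two exit times coincide a.s. (on $\{Y_0\in$ annulus$\}$), and the paths agree up to that time.

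\textbf{Non-explosion and unattainability.} Because the coefficients satisfy $|B(y)|\le C/|y|$ and $\sigma$ is bounded, no explosion occurs, so $\tau_R := \inf\{t : |Y_t|\ge R\}\to\infty$ as $R\to\infty$. The key step is to show that, since $Y_0\neq0$ a.s. (the initial law $f^z_Q(s,\cdot)\,\dd x$ is absolutely continuous; this is exactly where $s>0$ enters), the hitting time $\tau_0:=\inf\{t:Y_t=0\}$ is infinite a.s. For this I would use the Lyapunov function $V(y) := \log Q(y)$, or more flexibly $V=-Q(y)^{-\alpha}$ (for $d=2$) or $V=Q(y)^{-\alpha}$ with small $\alpha>0$. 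Let $L$ be the generator, $L\varphi = \tfrac12 B\cdot\nabla\varphi + \tfrac12 A:D^2\varphi$. Using $A\nabla Q = y$ and $\nabla Q\cdot y = 2Q$ from Lemma \ref{lem:structure}, we compute
\begin{align*}
\nabla Q\cdot A\nabla Q &= 2Q,\\
A : D^2 Q &= \operatorname{tr}(I) = d,\\
B\cdot \nabla Q &= \partial_j A_{ij}\,\partial_i Q = \partial_j\bigl(A_{ij}\partial_i Q\bigr) - A_{ij}\partial_{ij}Q = \divv(y) - d = 0.
\end{align*}
These identities give $L\,Q = d/2$, and the radial process $\rho_t := Q(Y_t)$ satisfies
\[
\dd\rho_t = \tfrac d2\,\dd t + \dd M_t, \qquad \dd\langle M\rangle_t = 2\rho_t\,\dd t.
\]
In other words, $\sqrt{2\rho_t}$ is a Bessel process of dimension $d\ge2$, driven by the one-dimensional Brownian motion $\int\rho^{-1/2}\,\dd M$ (time-changed via Lévy's characterization). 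It follows that $\sqrt{2\rho}$ never hits $0$ when started from a positive value, so $\tau_0=\infty$ a.s. This Bessel-type computation is the main obstacle: it needs the identities above to be justified on $\{y\neq0\}$ only, up to the stopping times $\tau_{\varepsilon,R}$. One then applies Itô's formula to $\log\rho$ (for $d=2$) or $\rho^{1-d/2}$ (for $d>2$) stopped at $\tau_{\varepsilon,R}$ and lets $\varepsilon\to0$ and then $R\to\infty$, in the classical way.

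\textbf{Conclusion.} Since $\tau_{\varepsilon,R}\uparrow\infty$ a.s. as $\varepsilon\downarrow0$ and $R\uparrow\infty$, and the same holds for $\widetilde Y$, the local pathwise uniqueness extends to all $t\ge0$, so $X=\widetilde X$ a.s. This yields the claimed pathwise uniqueness, and hence Lemma \ref{lem:crucial-lemma} via the superposition principle and the Yamada--Watanabe theorem.
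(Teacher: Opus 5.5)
Your proposal follows essentially the same route as the paper: you localize to annuli around $z$ with a Gr\"onwall estimate there, then show that $2Q(z-X_t)$ (your $2\rho_t$) is a squared-Bessel-type process of dimension $d$ and rule out hitting $0$ via the scale functions $y^{1-d/2}$ and $\log y$, with the absolutely continuous initial law ($s>0$) giving $X_0\neq z$ a.s.; your identity $B\cdot\nabla Q=0$ is just the non-divergence-form counterpart of the paper's $\operatorname{div}(A\nabla Q)=d$. One correction: $Q\in C^3(\R^d\setminus\{0\})$ gives only $A\in C^1$ and $B=\operatorname{div}A\in C^0$ away from the origin (not $A\in C^2$ or $B\in C^1$), so the Lipschitz/Gr\"onwall step on the annulus actually needs either $Q\in C^4$ or a Veretennikov-type pathwise uniqueness result for bounded drift with Lipschitz nondegenerate diffusion (your $\sigma=\sqrt{A}$ is indeed locally Lipschitz), a point the paper's own proof also passes over by simply calling the coefficients smooth on the annulus.
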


From this result, one obtains Lemma \ref{lem:crucial-lemma} as follows.

\begin{proof}[Proof of Lemma \ref{lem:crucial-lemma}.]
Suppose that $u$ and $\tilde{u}$ are two probability solutions to \eqref{eq:lin-FPE} with initial condition  $f^z_Q(s,x)\,\dd x$. 
Then, by the superposition principle (Proposition \ref{prop:SP}), there exist probabilistically weak solutions $X$, $\widetilde{X}$ to \eqref{SDE-linear} such that
\[
\mathcal{L}(X_t) = u(t,x) \,\dd x,\quad \mathcal{L}(\widetilde{X}_t) = \tilde{u}(t,x)\,\dd x,\quad \forall t \geq 0.
\]
Since Proposition \ref{prop:pathwise-unique} in particular implies weak uniqueness for \eqref{SDE-linear} with initial condition $f^z_Q(s,x)\,\dd x$, we obtain $\mathcal{L}(X_t) = \mathcal{L}(\widetilde{X}_t)$ for all $t\geq 0$, hence $u(t,x) = \tilde{u}(t,x)$ $\dd x$-a.e.\ for all $t \ge 0$.
\end{proof}

\subsection{Proof of Proposition \ref{prop:pathwise-unique}}
%&It applies to \emph{every} weak solution of the SDE \eqref{SDE-linear} with initial law $f_Q^z(s,\cdot)\dd x$, not only to the solutions coming from the PDE.

\begin{lemma}\label{lem:no-hit-sde}
Let $s> 0$ and $X$ be a weak solution of \eqref{SDE-linear} with initial datum $f_Q^z(s,x)\,\dd x$, defined on a stochastic basis $(\Omega, \Fscr, (\Fscr_t)_{t\geq 0}, \mathbb{P})$.
Then, for all $T>0$, we have
\begin{equation}\label{eq:no-hit}
\Pbb\bigl(X_t\ne z\text{ for all }t\in[0,T]\bigr)=1.
\end{equation}
\end{lemma}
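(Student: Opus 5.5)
The plan is to pass to the centered process $Y_t := z - X_t$ and show that a suitable function of $Q(Y_t)$ is a squared Bessel process of dimension $d \ge 2$, which is well known never to reach $0$. By Lemma~\ref{lem:nice-form} and \eqref{eq:aAbB}, the drift in \eqref{SDE-linear} is $b(x) = -\tfrac12 B(z-x)$ and the diffusion matrix is $\sqrt{A(z-x)}$. Hence
\[
\dd Y_t = \tfrac12 B(Y_t)\,\dd t - \sqrt{A(Y_t)}\,\dd W_t .
\]
Since $\mathcal L(X_0) = f^z_Q(s,x)\,\dd x$ with $s>0$ is absolutely continuous, we have $Y_0 \neq 0$ a.s. This is the only place where $s>0$ enters.

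The key computation concerns $\rho := 2Q$, i.e.\ $\rho = \|\cdot\|^2$, on $\R^d\setminus\{0\}$, where $Q$ is $C^3$. The generator of $Y$ has divergence form, $L\varphi = \tfrac12\bigl(A_{ij}\partial_{ij}\varphi + B_i\partial_i\varphi\bigr) = \tfrac12\,\partial_i\bigl(A_{ij}\partial_j\varphi\bigr)$, because $B_i = \partial_j A_{ij}$ and $A$ is symmetric. By Lemma~\ref{lem:structure}(iii), $A\nabla Q(y) = y$, and therefore
\[
L Q = \tfrac12\,\divv y = \tfrac d2 .
\]
The quadratic variation density is $\nabla Q\cdot A\nabla Q = y\cdot\nabla Q(y) = 2Q(y)$, using Lemma~\ref{lem:structure}(ii). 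Consequently, for $\rho_t := 2Q(Y_t)$, Itô's formula gives, as long as $Y$ stays away from $0$,
\[
\dd \rho_t = d\,\dd t + \dd M_t, \qquad \dd\langle M\rangle_t = 4\rho_t\,\dd t .
\]
This is exactly the $\mathrm{BESQ}(d)$ dynamics. Rather than constructing the driving Brownian motion via Lévy's characterization, I would apply Itô directly to the scale function $g(\rho) = \log\rho$ if $d=2$, or $g(\rho) = \rho^{1-d/2}$ if $d\ge 3$. We have $g'(\rho)\,d + 2\rho\, g''(\rho) = 0$, so $g(\rho_t)$ is a local martingale as long as $\rho$ stays away from $0$.

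To make this rigorous, fix $0<\varepsilon<M$ and set $\tau_{\varepsilon,M} := \inf\{t : \rho_t \notin (\varepsilon,M)\}$. On $\{\varepsilon\le\rho\le M\}$, which is a compact annulus by (H3) and homogeneity, the function $Q$ is $C^3$, $A$ is bounded and $B$ is bounded by Lemma~\ref{lem:structure}(iv). So Itô's formula for $g\circ 2Q$ applies up to $\tau_{\varepsilon,M}\wedge T$, and the stopped process is a bounded martingale. Work conditionally on $\mathcal F_0$, i.e.\ on $\rho_0$, restricted to $\{\varepsilon<\rho_0<M\}$. Optional stopping then gives, for $d=2$,
\[
\Pbb\bigl(\rho \text{ hits } \varepsilon \text{ before } M \text{ and before } T \,\big|\, \rho_0\bigr) \le \frac{\log M - \log\rho_0}{\log M - \log\varepsilon},
\]
and, for $d\ge3$, the analogous bound $\le \rho_0^{1-d/2}/\varepsilon^{1-d/2} \cdot(\ldots)$, which tends to $0$ as $\varepsilon\to0$ with $M$ fixed. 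Letting first $\varepsilon\downarrow0$ and then $M\uparrow\infty$ will finish the argument. For the second limit, continuity of paths on $[0,T]$ gives $\sup_{t\le T}\rho_t<\infty$ a.s., so $\Pbb(\tau_M\le T)\to0$. Together with $\rho_0\in(0,\infty)$ a.s., this yields \eqref{eq:no-hit}.

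The main obstacle is the regularity at the singularity. $Q$ is only $C^1$ at $0$, and $B$ blows up like $1/|y|$. So neither Itô's formula nor the identity $LQ=d/2$ is available near $0$, and the precise definition of the coefficients at $y=0$ plays a role. The localization via $\tau_{\varepsilon,M}$ is designed exactly to avoid this: every computation is performed only before $\rho$ reaches level $\varepsilon$. The homogeneity from Lemma~\ref{lem:structure} ensures uniform bounds on the annuli. The weak-solution integrability assumption guarantees that the stochastic integrals are well defined, so the stopped local martingale is genuinely bounded.
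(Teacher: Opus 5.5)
Your proposal is correct and is essentially the paper's proof. The paper also studies $Y_t=2Q(z-X_t)$ (your $\rho_t$), gets drift $d$ from the divergence form $\tfrac12\divv(A\nabla\,\cdot\,)$ together with $A\nabla Q(y)=y$, uses the scale functions $y^{1-d/2}$ and $-\log y$, restricts to the $\Fscr_0$-event $\{\varepsilon\le Y_0\le R\}$ to obtain a bounded martingale, and lets $\varepsilon\downarrow0$ before $R\uparrow\infty$. Your quadratic variation $\dd\langle M\rangle_t=4\rho_t\,\dd t$ has the right constant: the paper's Claim writes $8\int_0^{t\wedge\tau_{\varepsilon,R}}Y_r\,\dd r$, a factor-two slip since the generator is $\frac12 L$, but its generator $\mathcal G\phi=2y\phi''+d\phi'$ already uses $4y$, so nothing downstream is affected.
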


\begin{proof}
Fix $T>0$ throughout the proof. 
We set 
\[
Y_t:=2Q(z-X_t),\quad t\geq 0,
\]
and shall prove $Y_t >0$ for all $t \in [0,T]$ a.s., which completes the proof.

Since $\mathcal{L}(X_0) = f_Q^z(s,x)\,\dd x$, one has $\Pbb(X_0=z)=0$, hence $Y_0>0$ a.s.
Fix $0<\varepsilon<R<\infty$ and consider the $(\Fscr_t)$-stopping time
\[
\tau_{\varepsilon,R}:=
\inf\{t\ge 0 \,|\, Y_t\le \varepsilon\ \text{or}\ Y_t\ge R\}.
\]
Choose $\chi_{\varepsilon,R}\in C_c^\infty(\R^d\setminus\{z\})$ such that
\begin{align*}
&0\le \chi_{\varepsilon,R}\le 1,
\qquad
\chi_{\varepsilon,R}\equiv 1\ \text{on } K_{\varepsilon,R}:=\{x\in\R^d \,|\, \varepsilon\le 2Q(z-x) \le R\},
\\
%\qquad
&\operatorname{supp}\chi_{\varepsilon,R}\subset\{x\in\R^d \,|\, \varepsilon/2<2Q(z-x)<2R\},
\end{align*}
and define $q_{\varepsilon,R}(x):=\chi_{\varepsilon,R}(x)Q(z-x)\in C_c^3(\R^d)$.
Applying It\^o's formula up to time $\tau_{\varepsilon,R}$, we may compute $q_{\varepsilon,R}(X_t)$ and $q_{\varepsilon,R}(X_t)^2$ on the set $K_{\varepsilon,R}$ on which $q_{\varepsilon,R}=Q(z-\cdot)$ and all derivatives of $q_{\varepsilon,R}$ agree with those of $Q(z-\cdot)$. 

Observe that the linearized PDE \eqref{eq:lin-FPE}, which corresponds to the SDE \eqref{SDE-linear},  can be written as
\[
\partial_t u= \frac{1}{2}Lu, \qquad
Lu:=\divv\bigl( A(z-\cdot)\nabla u \bigr),
\]
for $A$ given in Lemma~\ref{lem:structure} (for sufficiently regular $u$). 
Indeed, it follows from Lemma \ref{lem:nice-form} that \eqref{eq:lin-FPE} becomes
\begin{align*}
\partial_t u
&= \partial_{ij} \biggl( \frac{1}{2} A_{ij}(z-\cdot) u \biggr) +\partial_i \biggl( \frac{1}{2} \partial_j A_{ij}(z-\cdot) u \biggr) \\
&= \partial_i \biggl( \frac{1}{2} A_{ij}(z-\cdot) \partial_j u \biggr).
\end{align*}

To apply It\^o's lemma, we compute $LQ(z-\cdot)$ and $L(Q(z-\cdot)^2)$.
First, by \eqref{eq:AgradQ} we have
\begin{equation}\label{e:LQD}
LQ(z-\cdot) =-\diver\bigl(A(z-\cdot)\nabla Q(z-\cdot)\bigr) =-\diver(z-\cdot)=d.
\end{equation}
Next,
\begin{align}
L\bigl(Q(z-\cdot)^2\bigr)-2Q(z-\cdot)LQ(z-\cdot)
&=2\nabla Q(z-\cdot)\cdot \bigl( A(z-\cdot)\nabla Q(z-\cdot) \bigr) \notag\\
&=2\nabla Q(z-\cdot) \cdot (z-\cdot) \notag\\
&=4Q(z-\cdot), \label{e:QFo}
\end{align}
where we used \eqref{eq:AgradQ} again as well as Lemma \ref{lem:structure}(ii).

\begin{claim}\label{c:Ito}
The stopped process
\[
M_t^{\varepsilon,R}:=Y_{t\wedge\tau_{\varepsilon,R}}-Y_0-d(t\wedge\tau_{\varepsilon,R})
\]
is a continuous local martingale with quadratic variation
\[
[M^{\varepsilon,R}]_t=8\int_0^{t\wedge\tau_{\varepsilon,R}} Y_r\,\dd r.
\]
\end{claim}

\begin{proof}
Since
$f_{\varepsilon,R}:=2q_{\varepsilon,R}\in C_c^\infty(\R^d)$,
It\^o's formula applied with the expression $\partial_t u= \frac{1}{2}Lu$ of the linearized PDE~\eqref{eq:lin-FPE} shows that
\[
f_{\varepsilon,R}(X_t)-f_{\varepsilon,R}(X_0)-\int_0^t \frac{1}{2} Lf_{\varepsilon,R}(X_r)\,\dd r
\]
is a continuous local martingale, hence so is its stopped version at $\tau_{\varepsilon,R}$.

Now, on $\{r<\tau_{\varepsilon,R}\}$ we have $\varepsilon<Y_r<R$, so combined with \eqref{e:LQD}, we observe
\[
f_{\varepsilon,R}(X_r) =2Q(z-X_r) =Y_r,
\qquad
Lf_{\varepsilon,R}(X_r)=L\bigl( 2Q(z-\cdot)\bigr)(X_r)=2d.
\]
Thus, it follows that
$M_t^{\varepsilon,R}
=Y_{t\wedge\tau_{\varepsilon,R}}-Y_0-d(t\wedge\tau_{\varepsilon,R})$
is a continuous local martingale.

The standard relation between the generator and the quadratic variation (see, e.g., \cite[Proposition~4.7.11, eq~(4.79)]{Kolokoltsov2011}) yields
\[
[M^{\varepsilon,R}]_t
=
\int_0^{t\wedge\tau_{\varepsilon,R}}
\bigl(L(f_{\varepsilon,R}^2)-2f_{\varepsilon,R}Lf_{\varepsilon,R}\bigr)(X_r) \,\dd r.
\]
Since on $\{r<\tau_{\varepsilon,R}\}$ one has $f_{\varepsilon,R}(X_r)=Y_r$, using \eqref{e:QFo}, we obtain
\[
L(f_{\varepsilon,R}^2)(X_r)-2f_{\varepsilon,R}(X_r)Lf_{\varepsilon,R}(X_r)=8Y_r.
\]
This completes the proof of Claim \ref{c:Ito}.
\end{proof}

We now define the one-dimensional generator
\[
(\mathcal G\phi)(y):=2y\phi''(y)+d\phi'(y),\qquad y>0.
\]
We set the following notation:
\[
\tau_{\varepsilon}:=\inf\{t\geq 0 \,|\, Y_t\le \varepsilon\},
\qquad
\tau^{R}:=\inf\{t\geq 0 \,|\, Y_t\ge R\},
\]
so that \(\tau_{\varepsilon,R}=\tau_{ \varepsilon}\wedge\tau^{R}\). 

\begin{claim} \label{c:HTE}
For every $R>0$, 
\[
\lim_{\varepsilon\downarrow 0}\Pbb(\tau_{\varepsilon}<T \wedge \tau^R)=0.
\]
\end{claim}

\begin{proof}
We split the proof into the two cases $d>2$ and $d=2$.

First, when \(d>2\), we consider the function 
\[
s(y):=y^{1-\frac{d}{2}},\qquad y>0.
\]
Observe that $s\in C^\infty((0,\infty))$ and is strictly decreasing, and a direct computation gives
$\mathcal G s=0$ on $(0,\infty)$.
Let
\[
h_{\varepsilon,R}(y):=\frac{s(y)-s(R)}{s(\varepsilon)-s(R)},\qquad y>0.
\]
Clearly $0\le h_{\varepsilon,R}\le 1$ on $[\varepsilon,R]$,
$h_{\varepsilon,R}(\varepsilon)=1$, $h_{\varepsilon,R}(R)=0$,
and $\mathcal G h_{\varepsilon,R}=0$ on $(0,\infty)$.
Since \(h_{\varepsilon,R}\) is smooth, It\^o's formula for the stopped semimartingale \(Y_{t\wedge\tau_{\varepsilon,R}}\) gives
\[
h_{\varepsilon,R}(Y_{t\wedge\tau_{\varepsilon,R}})
=
h_{\varepsilon,R}(Y_0)+N_t
+\int_0^{t\wedge\tau_{\varepsilon,R}} (\mathcal G h_{\varepsilon,R})(Y_r) \,\dd r = h_{\varepsilon,R}(Y_0)+N_t,
\]
where \(N\) is a continuous local martingale. 
Hence, \(h_{\varepsilon,R}(Y_{t\wedge\tau_{\varepsilon,R}})\) is a continuous local martingale.
Set
\[
B_{\varepsilon,R}:=\{\varepsilon\le Y_0 \le R\}\in\mathcal F_0.
\]
On \(B_{\varepsilon,R}\), the path-continuity implies \(Y_{t\wedge\tau_{\varepsilon,R}}\in[\varepsilon,R]\) for all \(t\in[0,T]\), thus
\( 0\le h_{\varepsilon,R}(Y_{t\wedge\tau_{\varepsilon,R}})\le 1 \)
on $B_{\varepsilon,R}$.
Therefore, the process
$\mathbf 1_{B_{\varepsilon,R}} h_{\varepsilon,R}(Y_{t\wedge\tau_{\varepsilon,R}})$
is a bounded local martingale, thus a martingale. 
Evaluating at \(t=T\) gives
\[
\mathbb E\bigl[\mathbf 1_{B_{\varepsilon,R}} h_{\varepsilon,R}(Y_{T \wedge \tau_{\varepsilon,R}})\bigr]
=
\mathbb E\bigl[\mathbf 1_{B_{\varepsilon,R}} h_{\varepsilon,R}(Y_0)\bigr].
\]
On the event \(\{\tau_{\varepsilon}< T \wedge \tau^R \}\cap B_{\varepsilon,R}\) one has \(Y_{T \wedge \tau_{\varepsilon,R}}=\varepsilon\), thus \(h_{\varepsilon,R}(Y_{T \wedge \tau_{\varepsilon,R}})=1\).
This implies
\[
\Pbb\Bigl(\{\tau_{\varepsilon}<T \wedge \tau^R\} \cap B_{\varepsilon,R}\Bigr)
\le
\mathbb E\bigl[\mathbf 1_{B_{\varepsilon,R}} h_{\varepsilon,R}(Y_0)\bigr].
\]
Since \(\{\tau_{\varepsilon}<T \wedge \tau^R\}\subset \{Y_0<R\}\), we conclude
\begin{align*}
\Pbb\Bigl(\{\tau_{\varepsilon}< T \wedge \tau^R\} \Bigr) &= \Pbb\Bigl(\{\tau_{\varepsilon}< T \wedge \tau^R\} \cap B_{\varepsilon, R} \Bigr) + \Pbb\Bigl(\{\tau_{\varepsilon}< T \wedge \tau^R\}\cap B_{\varepsilon, R}^c \Bigr)
\\
&\le
\mathbb E\bigl[\mathbf 1_{B_{\varepsilon,R}} h_{\varepsilon,R}(Y_0)\bigr] + \Pbb(Y_0<\varepsilon).
\end{align*}
Now, it follows from \(Y_0>0\) a.s.\ that \(\Pbb(Y_0<\varepsilon)\to 0\) as \(\varepsilon\downarrow 0\). 
Moreover, noting $\lim_{\varepsilon \downarrow 0} s(\varepsilon) =\infty$,  we have 
$\mathbf 1_{\{\varepsilon\le y \le R\}} h_{\varepsilon,R}(y) \to 0$ as $\varepsilon\downarrow 0$ for every \(y>0\).
Combining this with $0\le \mathbf 1_{\{\varepsilon\le y\le R\}} h_{\varepsilon,R}(y)\le 1$, by the dominated convergence theorem, we obtain
\[
\lim_{\varepsilon\downarrow 0}
\mathbb E\bigl[\mathbf 1_{B_{\varepsilon,R}} h_{\varepsilon,R}(Y_0)\bigr]=0.
\]
This shows the claim.

Next, when \(d=2\), we set
\[
s(y):=-\log y,\qquad y>0.
\]
Again, $s \in C^\infty((0,\infty))$ and is strictly decreasing, $\mathcal G s=0$ on $(0,\infty)$, and the function
\[
h_{\varepsilon,R}(y):=\frac{s(y)-s(R)}{s(\varepsilon)-s(R)}=\frac{\log(y/R)}{\log(\varepsilon/R)},\qquad y>0,
\]
satisfies \(0\le h_{\varepsilon,R}\le 1\) on \([\varepsilon,R]\),
$h_{\varepsilon,R}(\varepsilon)=1$, $h_{\varepsilon,R}(R)=0$,
and \(\mathcal G h_{\varepsilon,R}=0\) on \((0,\infty)\). 
The rest of the proof is completely the same as in the case \(d>2\) above.
%It\^o's formula shows that $h_{\varepsilon,R}(Y_{t\wedge\tau_{\varepsilon,R}})$ is a continuous local martingale. 
%With the same initial set
%\[
%B_{\varepsilon,R}:=\{\varepsilon\le Y_0<R\}\in\mathcal F_0,
%\]
%one has \(Y_{t\wedge\tau_{\varepsilon,R}}\in[\varepsilon,R]\) on \(B_{\varepsilon,R}\), hence
%\[
%0\le h_{\varepsilon,R}(Y_{t\wedge\tau_{\varepsilon,R}})\le 1
%\qquad\text{on }B_{\varepsilon,R}.
%\]
%Therefore, $\mathbf 1_{B_{\varepsilon,R}} h_{\varepsilon,R}(Y_{t\wedge\tau_{\varepsilon,R}})$ is a bounded local martingale, hence a martingale. 
%Evaluating at \(t=T\) and arguing exactly as the case $d>2$ above yields
%\[
%\Pbb(\tau_{\varepsilon}<\tau^R\wedge T)
%\le
%\Pbb(Y_0<\varepsilon) +
%\mathbb E\bigl[\mathbf 1_{B_{\varepsilon,R}} h_{\varepsilon,R}(Y_0)\bigr].
%\]
%For every fixed \(R>0\), the first term tends to $0$ as \(\varepsilon\downarrow 0\). 
%Moreover, for every fixed \(y>0\), we have 
%\[
%\mathbf 1_{\{\varepsilon\le y<R\}} h_{\varepsilon,R}(y) \to 0
%\qquad (\varepsilon\downarrow 0),
%\]
%and $0\le \mathbf 1_{\{\varepsilon\le y<R\}} h_{\varepsilon,R}(y)\le 1$.
%Hence, again by the dominated convergence theorem,
%\[
%\lim_{\varepsilon\downarrow 0}
%\mathbb E\bigl[\mathbf 1_{B_{\varepsilon,R}} h_{\varepsilon,R}(Y_0)\bigr]=0.
%\]
%Therefore
%\[
%\lim_{\varepsilon\downarrow 0}\Pbb(\tau_{\varepsilon}<\tau^R\wedge T)=0
%\qquad\text{for every }R>0.
%\]
%This completes the proof of Claim \ref{c:HTE}.
\end{proof}

To conclude the proof of Lemma \ref{lem:no-hit-sde}, for each $m\in \N$, define
\[
\Omega_m:=\biggl\{\sup_{0\le t\le T} Y_t < m\biggr\}.
\]
Then \(\Omega_m\uparrow \Omega\) as \(m\to\infty\), because every continuous path is bounded on \([0,T]\). 
Note that, for each fixed $m\in\mathbb N$, 
\[
\{\tau_0\le T\}\cap \Omega_m
\subset
\bigcap_{n\in\N}\{\tau_{1/n}<T \wedge \tau^m \}.
\]
Thus, Fatou's lemma and Claim~\ref{c:HTE} yield
\begin{align*}
\Pbb\bigl(\{\tau_0\le T\}\cap \Omega_m\bigr)
&\le
\Pbb\Biggl(\bigcap_{n\in\N}\{\tau_{1/n}<T \wedge \tau^m\}\Biggr) \\
&\le
\liminf_{n\to\infty}\Pbb(\tau_{1/n}<T \wedge \tau^m)=0.
\end{align*}
Since \(\Omega_m\uparrow \Omega\), we conclude
\[
\Pbb(\tau_0\le T)
=
\lim_{m\to\infty}\Pbb\bigl(\{\tau_0\le T\}\cap \Omega_m\bigr)=0.
\]
Therefore, \(Y_t>0\) for all \(t\in[0,T]\) a.s., as desired.
\end{proof}

Now we are prepared to prove Proposition \ref{prop:pathwise-unique}.

\begin{proof}[Proof of Proposition \ref{prop:pathwise-unique}.]
Let $X$ and $\widetilde X$ be two weak solutions of \eqref{SDE-linear} on the same stochastic basis, driven by the same Brownian motion such that $\mathcal{L}(X_0) = \mathcal{L}(\widetilde{X}_0) = f^z_Q(s,x) \,\dd x$ and $X_0 = \widetilde{X}_0$ a.s. 
By Lemma \ref{lem:no-hit-sde}, 
\[
\Pbb\bigl(X_t\ne z\text{ and }\widetilde X_t\ne z\text{ for all }t\in[0,T]\bigr)=1.
\]
Fix $0<\varepsilon<R<\infty$ and define
\[
\tau_{\varepsilon,R} :=\inf\bigl\{t\ge 0 \,|\, Y_t\le \varepsilon,\, \widetilde Y_t\le \varepsilon,\, Y_t\ge R,\, \text{or}\ \widetilde Y_t\ge R\bigr\},
\]
where $Y_t:=2Q(z-X_t)$ and $\widetilde Y_t:=2Q(z-\widetilde X_t)$.
On the compact annulus
\[
K_{\varepsilon,R}:=\bigl\{x\in\R^d \,|\, \varepsilon\le 2Q(z-x) \le R\bigr\},
\]
the coefficients of the SDE~\eqref{SDE-linear} are smooth, hence Lipschitz continuous. 
Applying It\^o's formula to $|X_{t\wedge\tau_{\varepsilon,R}}-\widetilde X_{t\wedge\tau_{\varepsilon,R}}|^2$, taking expectation, and using this Lipschitz continuity gives
\[
\E\bigl|X_{t\wedge\tau_{\varepsilon,R}}-\widetilde X_{t\wedge\tau_{\varepsilon,R}}\bigr|^2
\le C_{\varepsilon,R}\int_0^t
\E\bigl|X_{r\wedge\tau_{\varepsilon,R}}-\widetilde X_{r\wedge\tau_{\varepsilon,R}}\bigr|^2 \,\dd r
\]
for some $C_{\varepsilon, R} >0$.
Since $X_0 = \widetilde{X}_0$ a.s., Gr\"onwall's lemma implies
\[
\E\bigl|X_{t\wedge\tau_{\varepsilon,R}}-\widetilde X_{t\wedge\tau_{\varepsilon,R}}\bigr|^2=0,
\qquad \forall 0\le t\le T,
\]
so that
\[
X_{t\wedge\tau_{\varepsilon,R}}=\widetilde X_{t\wedge\tau_{\varepsilon,R}}
\qquad\text{a.s. for every }t\in[0,T].
\]
Because both $Y$ and $\widetilde Y$ are strictly positive on $[0,T]$ and have continuous paths, we have $T \wedge \tau_{\varepsilon,R}\uparrow T$ a.s.\ as $\varepsilon\downarrow 0$ and $R\uparrow\infty$. 
This yields $X_t = \widetilde{X}_t$ a.s.\ for every $t\in [0,T)$, therefore, the path continuity of $X$ and $\widetilde{X}$ shows $X = \widetilde{X}$ on $[0,T]$ a.s. 
\end{proof}

\begin{remark}\label{rm:s>0}
    We stress that the above proof of Lemma \ref{lem:no-hit-sde} crucially relies on $s>0$, more precisely, on the initial condition $f^z_Q(s,x)\,\dd x$ being absolutely continuous with respect to the Lebesgue measure. 
    This shows that, in the condition (ii) of Proposition \ref{prop:NL-MC-construction}, it is essential that one need not consider $s=0$. Notably, nevertheless, in Theorem \ref{thm:main-result} we obtain weak uniqueness of solutions to \eqref{eq:DDSDE} with Dirac initial conditions. 
\end{remark}

\section{Brownian motion 
in Minkowski normed spaces 
as probabilistically strong solutions}
\label{sect:strong}

Let us point out an additional consequence of Proposition \ref{prop:pathwise-unique}. 
Given $(s,z)\in (0,\infty) \times \R^d$, Proposition \ref{prop:pathwise-unique} yields pathwise uniqueness for \eqref{SDE-linear} with initial condition $f^z_Q(s,x) \,\dd x$. 
The existence of a probabilistically weak solution to \eqref{SDE-linear} with this initial condition follows from Theorem \ref{thm:construction} (indeed, consider $(X^z_{s+t})_{t\geq 0}$, where $X^z$ denotes the solution obtained in Theorem \ref{thm:construction}). 
Thus, the Yamada--Watanabe theorem implies that Brownian motion on a Minkowski normed space is adapted to the filtration of its driving Brownian motion from any strictly positive time $s>0$. 
More precisely, we have the following result.

\begin{proposition}\label{prop:strong-sol}
Let $d\geq 2$, $z\in \R^d$ and $s>0$. Solutions to \eqref{eq:DDSDE} with $u(t,x)$ replaced by $f^z_Q(s+t,x)$ and with initial datum $f^z_Q(s,x)\,\dd x$ are probabilistically strong and pathwise unique. 

In particular, if $(X^z_t)_{t\geq 0}$ is a Brownian motion in the Minkowski normed space under investigation from $z$ as in Definition \ref{def:FBm} on a stochastic basis, solving \eqref{eq:DDSDE} with a standard Brownian motion $W$, then $(X^z_{s+t})_{t\geq0}$ is adapted to the canonical filtration of $(W_{s+t}-W_s)_{t\geq0}$. 
\end{proposition}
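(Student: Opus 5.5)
The plan is to apply the Yamada--Watanabe theorem to the (non-distribution dependent) SDE \eqref{SDE-linear} with initial law $f^z_Q(s,x)\,\dd x$. First, we observe that when $u(t)$ is replaced by $f^z_Q(s+t)$, equation \eqref{eq:DDSDE} becomes exactly \eqref{SDE-linear}. By Lemma \ref{lem:nice-form}, the coefficients $a(x,f^z_Q(s+t))=\frac12 A(z-x)$ and $b(x,f^z_Q(s+t))=-\frac12 B(z-x)$ do not depend on $t$. Moreover, the marginal constraint $\mathcal{L}(X_t)=f^z_Q(s+t,x)\,\dd x$ is then only an additional property of a solution, not part of the dynamics.

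\textbf{Existence of a weak solution.} Let $X^z$ be the solution from Theorem \ref{thm:construction}, and set $\widehat X_t:=X^z_{s+t}$, $\widehat W_t:=W_{s+t}-W_s$, with filtration $\widehat{\Fscr}_t:=\Fscr_{s+t}$. Then $\widehat W$ is an $(\widehat{\Fscr}_t)$-Brownian motion. Shifting the integrals in the SDE from time $s$ shows that $\widehat X$ solves \eqref{SDE-linear} with $\mathcal{L}(\widehat X_0)=f^z_Q(s,x)\,\dd x$ and $\mathcal{L}(\widehat X_t)=f^z_Q(s+t,x)\,\dd x$. The integrability requirement in Definition \ref{def:F-DDSDE} is inherited from the bound $\int |z-x|^{-1}f^z_Q(r,x)\,\dd x\le Cr^{-1/2}$ in the proof of Lemma \ref{lem:f-solves-NLFPE} (it is even better here, since $r\ge s>0$).

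\textbf{Pathwise uniqueness.} This is Proposition \ref{prop:pathwise-unique}, which covers \emph{all} weak solutions with that initial law, so in particular those carrying the marginal constraint. The Yamada--Watanabe theorem (in the form of Kurtz, which allows a fixed initial distribution and merely measurable coefficients with locally integrable drift) then gives that every weak solution is strong. Concretely, there is a measurable map $F$ with $\widehat X=F(\widehat X_0,\widehat W)$, and $\widehat X$ is adapted to the filtration generated by $\widehat X_0$ and $\widehat W$. Pathwise uniqueness is inherited directly.

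\textbf{The ``in particular'' part.} We apply the above to $\widehat X$ built from a given Minkowski Brownian motion $X^z$. The result is that $(X^z_{s+t})_{t\ge0}$ is a measurable functional of $X^z_s$ and $(W_{s+t}-W_s)_{t\ge0}$, adapted in $t$. Here the initial value $X^z_s$ must be included in the time-zero $\sigma$-algebra of the shifted problem, as is standard for strong solutions with random initial data. I would read ``canonical filtration of $(W_{s+t}-W_s)$'' as this filtration augmented by $\sigma(X^z_s)$.

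The main obstacle is checking that the Yamada--Watanabe theorem applies in this singular setting. One needs the version that requires only that solutions exist and that pathwise uniqueness hold for the given initial law, with no regularity of the coefficients. Kurtz's general Yamada--Watanabe--Engelbert theorem does exactly this: it requires only that the set of solution laws be convex and that pathwise uniqueness hold. Convexity holds because the equation is now linear, i.e.\ non-distribution dependent.
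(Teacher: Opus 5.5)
Your proposal is correct and follows the paper's own argument. Existence comes from the time-shifted solution $(X^z_{s+t})_{t\ge0}$ of Theorem \ref{thm:construction}, pathwise uniqueness from Proposition \ref{prop:pathwise-unique}, and the Yamada--Watanabe theorem does the rest; your added details on the shifted filtration, the integrability and the convexity of the solution set are sound. Your reading of the ``in particular'' part is also the right one: $X^z_s$ has a density and is independent of $(W_{s+t}-W_s)_{t\ge0}$, so it cannot be measurable with respect to that Brownian filtration alone, and the filtration must be augmented by $\sigma(X^z_s)$, which the paper's wording omits.
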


\section{Examples of Minkowski normed spaces} \label{sec:E}

Finally, we give several examples of Minkowski norms that are not induced from inner products.
Recall that $|\cdot|$ denotes the standard Euclidean norm on $\R^d$.
We first discuss asymmetric norms arising from fundamental examples of irreversible Finsler metrics.

\begin{example}[Asymmetric norms] \ 
\label{ex:Randers}
\begin{enumerate}[(a)]
\item
\emph{Randers metrics} provide an important class of irreversible Finsler structures (see \cite{BCS,Ohta2021}).
In our setting, it is given as, for fixed $a \in \R^d \setminus \{0\}$ with $|a|<1$,
\[
\|x\| := |x| +a \cdot x.
\]
The unit sphere $\{x \in \R^d \,|\, \|x\|=1 \}$ is an ellipsoid whose center is not the origin, and $\|{-a}\| \neq \|a\|$.
In this case, one can explicitly calculate $A$ and $B$ as in Lemma \ref{lem:structure} as follows (see \cite[Lemma 6.14]{Ohta2021}): 
\begin{align*}
A_{ij}(x) &= \frac{|x|}{\|x\|} \delta_{ij} + \frac{a \cdot x +|a|^2 |x|}{\|x\|^3} x_i x_j - \frac{|x|}{\|x\|^2} (a_i x_j +a_j x_i), \\
B_i(x) &= \frac{d+1}{\|x\|^3} \bigl( (a \cdot x) x_i +|a|^2 |x| x_i -\|x\| |x| a_i \bigr),
\end{align*}
for $x \in \R^d \setminus \{0\}$. 
Thus, for $i$ with $a_i \neq 0$, we have $B_i(x) \neq 0$ when $x_i=0$.

\item
More generally, for $a \in \R^d \setminus \{0\}$ with $|a|<1$ and a positive $C^\infty$-function $\phi:(-1,1) \to (0,\infty)$ satisfying 
\[
\phi(s) -s\phi'(s) +(b^2-s^2)\phi''(s)>0
\]
for some $b \in (0,1)$ and all $s \in [-b,b]$, the associated \emph{$(\alpha,\beta)$-metric} is defined by
\[
\|x\| :=|x| \cdot \phi\biggl( \frac{a \cdot x}{|x|} \biggr) \,\ (x \neq 0), \qquad
\|0\| :=0
\]
(see, e.g., \cite{ShenShen}).
Note that choosing $\phi(s)=1+s$ recovers the Randers metric for $a$.
\end{enumerate}
\end{example}

Next, we consider symmetric norms.

\begin{example}[Symmetric norms] \ 
\begin{enumerate}[(a)]
\item
For $d\ge 2$ and $\varepsilon\in\mathbb R$, define
\[
\|x\|
:= \biggl( |x|^2 + \varepsilon\,\frac{x_1^4+\cdots+x_d^4}{|x|^2} \biggr)^{1/2}
\,\ (x\neq 0),
\qquad
\|0\|:=0.
\]
When $|\varepsilon|>0$ is sufficiently small, $Q_\varepsilon(x):=\|x\|^2/2$ satisfies our hypotheses \textup{(H1)--(H3)}.
Indeed, since $Q_\varepsilon(x)=O(|x|^2)$ and $\nabla Q_\varepsilon(x)=O(|x|)$ near $0$, $\nabla Q_\varepsilon(0)=0$ and $Q_\varepsilon\in C^1(\mathbb R^d)$.
Moreover, by $D^2Q_0=I$ and the continuity of $(\varepsilon,\theta)\mapsto D^2Q_\varepsilon(\theta)$ for $\theta \in S^{d-1}$, when $|\varepsilon|$ is sufficiently small, we find $\lambda,\Lambda>0$ such that
    \[
    \lambda I\le D^2Q_\varepsilon(\theta)\le \Lambda I
    \qquad \forall \theta\in S^{d-1}.
    \]
To see that $\|\cdot\|$ does not come from an inner product, 
let $e_1,e_2$ be the first two basis vectors.
Then 
$\|e_1\|^2=\|e_2\|^2=1+\varepsilon$
and $\|e_1+e_2\|^2=\|e_1-e_2\|^2=2+\varepsilon$.
Therefore, the parallelogram identity fails (and hence $\|\cdot\|$ is not induced by an inner product) unless $\varepsilon=0$:
\[
2\|e_1\|^2 +2\|e_2\|^2
-\|e_1+e_2\|^2 -\|e_1-e_2\|^2
=2\varepsilon
\neq 0.
\]

\item
A norm $\|\cdot\|$ is determined by its unit sphere $S:=\{x \in \R^d \,|\, \|x\|=1 \}$;
then conditions (H1)--(H3) are met when $S$ is $C^3$ and positively curved.
For example, in the polar coordinates $(r,\theta)$ of $\R^2$, given $\varepsilon \in (0,1/3)$, set
\[
S:=\{ (r,\theta) \,|\, r=\rho(\theta) \}, \qquad
\rho(\theta) :=\sqrt{1+\varepsilon\cos 2\theta}.
\]
Then, the associated norm is
\[
\|(r,\theta)\| =\frac{r}{\rho(\theta)} \,\ (r>0), \qquad
\|0\| =0.
\]
To show that this norm is uniformly convex, observe that
for a function of the form
$Q(r,\theta)=r^2 q(\theta)$ with $q>0$,
the Hessian in the orthonormal polar frame $(e_r,e_\theta)$ is
\[
D^2Q=
\begin{pmatrix}
2q & rq'\\[1mm]
rq' & r^2(q''+2q)
\end{pmatrix}.
\]
Hence, $D^2Q$ is positive definite on $\R^2 \setminus \{0\}$ if and only if
\[
2q(q''+2q)-(q')^2>0.
\]
For $q(\theta)=1/(1+\varepsilon\cos 2\theta)$, a direct computation gives
\[
2q(q''+2q)-(q')^2
=\frac{4(1+3\varepsilon^2 +4\varepsilon\cos 2\theta)}{(1+\varepsilon\cos 2\theta)^4}
\ge \frac{4(1+3\varepsilon^2 -4\varepsilon)}{(1+\varepsilon\cos 2\theta)^4},
\]
which is positive provided $\varepsilon \in (0,1/3)$.
The failure of the parallelogram identity can be seen in the same way as (a):
\begin{align*}
2\|e_1\|^2 +2\|e_2\|^2
-\|e_1+e_2\|^2 -\|e_1-e_2\|^2
&=\frac{2}{1+\varepsilon} +\frac{2}{1-\varepsilon} -4 \\
&=\frac{4\varepsilon^2}{1-\varepsilon^2}
\neq 0.
\end{align*}

\item
One can generalize the construction in (b) to
\[
\|x\|=|x|\biggl(1+\varepsilon\psi\biggl(\frac{x}{|x|}\biggr)\biggr) \,\ (x \neq 0),
\qquad \|0\|=0,
\]
where $\psi\in C^\infty(S^{d-1})$ is even and nonconstant. 
When $|\varepsilon|$ is sufficiently small, this norm satisfies \textup{(H1)--(H3)}.
\end{enumerate}
\end{example}

%%%%

\bibliographystyle{plain}
\bibliography{refs}

\end{document}